\newcommand{\later}[1]{{}}
\newcommand{\old}[1]{{}}
\long\def\ignore#1{}
\def\cut{\ignore}
\newtheorem{theorem}{Theorem}
\newtheorem{conjecture}{Conjecture}
\newtheorem{fact}{Fact}
\newcommand{\NN}{\mathbf{N}} 
\newcommand{\RR}{\mathbf{R}} 
\newcommand{\etal}{{et~al.}}
\newcommand{\ie}{{i.e.}}
\def\deg{\texttt{deg}}
\def\path{\texttt{path}}
\begin{document}

\title{Lattice spanners of low degree\footnote{%
A preliminary version in: Proceedings of the 2nd International Conference on Algorithms
and Discrete Applied Mathematics, Thiruvananthapuram, India, 
Feb. 2016, vol $9602$ of LNCS.}}

\author{%
Adrian Dumitrescu\\
\small Department of Computer Science\\[-0.8ex]
\small University of Wisconsin--Milwaukee\\[-0.8ex]
\small Milwaukee, WI, USA\\
\small\tt \Email{ dumitres@uwm.edu}\\
\and
Anirban Ghosh\\
\small Department of Computer Science\\[-0.8ex]
\small University of Wisconsin--Milwaukee\\[-0.8ex]
\small Milwaukee, WI, USA\\
\small\tt \Email{ anirban@uwm.edu}\\
}

\date{\today}
\maketitle

\begin{abstract}
  Let $\delta_0(P,k)$ denote the degree $k$ dilation of a point set~$P$
 in the domain of plane geometric spanners.
If $\Lambda$ is the infinite square lattice, it is shown that 
$1+\sqrt{2} \leq \delta_0(\Lambda,3) \leq (3+2\sqrt2) \, 5^{-1/2} = 2.6065\ldots$
and $\delta_0(\Lambda,4) = \sqrt{2}$.
If $\Lambda$ is the infinite hexagonal lattice, it is shown that 
  $\delta_0(\Lambda,3) = 1+\sqrt{3}$ and $\delta_0(\Lambda,4) = 2$. 
All our constructions are planar lattice tilings constrained to degree
$3$ or $4$.

\medskip\noindent {\bf Keywords:} geometric graph, 
plane spanner, vertex dilation, stretch factor, planar lattice. 
\end{abstract}

\section{Introduction} \label{sec:intro}

Let $P$ be a (possibly infinite) set of points in the Euclidean plane.
A \emph{geometric graph} embedded on $P$ is a graph $G = (V,E)$ where
$V = P$ and an edge $uv \in E$ is the line segment connecting $u$ and $v$. 
View $G$ as a edge-weighted graph, where the weight of $uv$ is 
the Euclidean distance between $u$ and $v$.
A geometric graph $G$ is a \emph{t-spanner}, for some $t \geq 1$,
if for every pair of vertices $u,v$ in $V$, the length of the shortest
path $\pi_G(u,v)$ between $u$ and $v$ in $G$ is at most $t$ times $|uv|$,
\ie, $\forall u,v \in V, |\pi_G(u,v)| \leq t |uv|$.
Obviously, the complete geometric graph on a set of points is a
1-spanner. When there is no need to specify $t$, the rather imprecise term
\emph{geometric spanner} is also used. 
A geometric spanner $G$ is \emph{plane} if no two edges in $G$ cross.
Here we only consider plane geometric spanners.
A geometric spanner of degree at most $k$ is called
\emph{degree $k$ geometric spanner}. 

Consider a geometric spanner $G=(V,E)$. The \emph{vertex dilation}
or \emph{stretch factor} of a pair $u,v\in V$, denoted $\delta_G(u,v)$,
is defined as $\delta_G(u,v) =  |\pi_G(u,v)| / |uv|$.
If $G$ is clear from the context, we simply write $\delta(u,v)$. 
The \emph{vertex dilation} or \emph{stretch factor} of $G$,
denoted $\delta(G)$,  is defined as  
$\delta(G) = \sup_{u,v \in V} \delta_G(u,v). $ The terms \emph{graph theoretic dilation}
and \emph{spanning ratio} are also used~\cite{EGK06,KG92,NS07}. 

Given a point set $P$, let the \emph{dilation} of $P$, denoted by $\delta_0(P)$,
be the minimum stretch factor of a plane geometric graph (equivalently, triangulation)
on vertex set $P$; see~\cite{Mu04}. 
Similarly, let the \emph{degree $k$ dilation} of $P$, denoted by $\delta_0(P,k)$,
be the minimum stretch factor of a plane geometric graph of degree at most $k$
on vertex set $P$. Clearly, $\delta_0(P,k) \geq \delta_0(P)$
holds for any $k$. Furthermore, $\delta_0(P,j) \geq \delta_0(P,k)$ holds for any $j<k$.
(Note that the term \emph{dilation} has been also used with different meanings in
the literature, see for instance~\cite{PBMS13,KKP15}.)

The field of geometric spanners has witnessed a great deal of interest from researchers,
both in theory and applications;
see for instance the survey articles~\cite{PBMS13,Epp00,GK07,NS07}.
For the current status of various open problems in this area, the reader is referred
to the web-page maintained by Smid~\cite{Smid-open}. 

Typical objectives include constructions of low stretch factor geometric spanners
that have few edges, bounded degree, low weight and/or diameter, etc. 
Geometric spanners find their applications in the areas of robotics,
computer networks, distributed systems and many others.
Various algorithmic and structural results on sparse geometric spanners can be 
found in~\cite{AKK+08,ADD+93,ABC+08,CDNS95,CHL08,EKLL04,KKP15,LL92}.

Chew~\cite{Ch89} was the first to show that it is always possible to construct
a plane 2-spanner with $O(n)$ edges on a set of $n$ points; more recently,
Xia~\cite{Xia13} proved a slightly sharper upper bound of $1.998$ 
using Delaunay triangulations.
Bose~\etal~\cite{BGS05} showed that there exists a plane $t$-spanner
of degree at most 27 on any set of points in the Euclidean plane
where $t \approx 10.02$. The result was subsequently improved
in~\cite{BGHP10,BSX09,BCC12,KP08,LW04} in terms of degree. Recently,
Bonichon~\etal~\cite{BKPX15} reduced the degree to $4$ with $t \approx 156.82$. 
The question whether the degree can be reduced to 3 remains open
at the time of this writing; if one does not insist on having a plane spanner,
Das~\etal~\cite{DH96} showed that degree 3 is achievable.
From the other direction, lower bounds on the stretch factors of plane spanners
for finite point sets have been investigated in~\cite{DG15a,KKP15,Mu04}. 

It is natural to study the existence of low-degree spanners of
fundamental regular structures, such as point lattices. Indeed, these have
been the focus of interest since the early days of computing.
One such intense research area concerns VLSI~\cite{Le83}. 
Other applications of spanners (not necessarily geometric) are
in the areas of computer networks and parallel computing;
see for instance~\cite{LS93,LSS96}. While the authors of~\cite{LS93,LSS96}
do examine grid structures (including planar ones), the resulting stretch factors
however are not defined (or measured) in geometric terms.
More recently, lattice structures at a larger scale are used in industrial design,
modern urban design and outer space design. 
Indeed, Manhattan-like layout of facilities and road connections
are very convenient to plan and deploy, frequently in an automatic manner.
Studying the stretch factors that can be achieved in low degree spanners
of point sets with a lattice structure appears to be quite useful.
The two most common lattices are the square lattice and the hexagonal lattice. 

According to an argument due to Das and Heffernan~\cite{DH96},\cite[p.~468]{NS07},
the $n$ points in a $\sqrt{n} \times \sqrt{n}$ section of the integer lattice cannot be
connected in a path or cycle with stretch factor $o(\sqrt{n})$, $O(1)$ in particular.
Similarly, no degree $2$ plane spanner of the infinite integer lattice
can have stretch factor $O(1)$, 
hence a minimum degree of $3$ is necessary in achieving a constant stretch factor. 
The same facts hold for the infinite hexagonal lattice.

\paragraph{Our results.}
Let $\Lambda$ be the infinite square lattice.
  We show that the  degree $3$ and $4$ dilation of this lattice are bounded as follows:
  
\begin{itemize}
 \item[(i)]~$1 + \sqrt{2} \leq \delta_0(\Lambda,3) \leq (3+2\sqrt2) \, 5^{-1/2}$ 
(Theorem~\ref{thm:square:3}, Section~\ref{sec:square}).
  \item[(ii)]~$\delta_0 (\Lambda,4)  = \sqrt{2}$
(Theorem~\ref{thm:square:4}, Section~\ref{sec:square}). 
\end{itemize}

 If $\Lambda$ is the infinite hexagonal lattice, we show that
 
\begin{itemize}
\item[(i)] $\delta_0(\Lambda,3) = 1+\sqrt{3}$
   (Theorem~\ref{thm:hexagonal:3}, Section~\ref{sec:hexagonal}).
\item[(ii)] $\delta_0(\Lambda,4) = 2$
   (Theorem~\ref{thm:hexagonal:4}, Section~\ref{sec:hexagonal}).
\end{itemize}

\section{Preliminaries} \label{sec:prelim}

By the well known Cauchy-–Schwarz inequality for $n=2$,
if $a,b,x,y \in \RR^+$, then
$$ g(x,y) = \frac{ax+by}{\sqrt{x^2+y^2}} \leq \sqrt{a^2+b^2}, $$
and moreover, $g(x,y) = \sqrt{a^2+b^2}$ when $x/y = a/b$.
In this paper, we will use this inequality in an equivalent form:

\begin{fact} \label{fact1}
        Let $a,b,\lambda \in \RR^+$. Then 
$ f(\lambda) = \dfrac{a \lambda +b}{\sqrt{\lambda^2+1}} \leq \sqrt{a^2+b^2}$, 
and moreover, $f(\lambda) = \sqrt{a^2+b^2}$ when $\lambda = a/b$.
\end{fact}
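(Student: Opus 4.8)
The plan is to derive the claim directly from the Cauchy--Schwarz inequality displayed just above the statement. That inequality asserts $g(x,y) = (ax+by)/\sqrt{x^2+y^2} \le \sqrt{a^2+b^2}$ for $x,y \in \RR^+$, with equality exactly when $x/y = a/b$. Setting $x = \lambda$ and $y = 1$ yields $g(\lambda,1) = (a\lambda + b)/\sqrt{\lambda^2+1} = f(\lambda)$, so the bound $f(\lambda) \le \sqrt{a^2+b^2}$ follows immediately, and the equality condition $x/y = a/b$ becomes $\lambda = a/b$. This one-line specialization is really all that is needed, since Fact~\ref{fact1} is only a reparametrization of the two-variable inequality.

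For a self-contained argument that does not quote the displayed inequality, I would instead prove the bound from scratch. Since $a,b,\lambda \in \RR^+$, both sides are positive, so squaring is an equivalence and it suffices to establish
\[
(a\lambda + b)^2 \le (a^2+b^2)(\lambda^2+1).
\]
Expanding both sides and subtracting, the difference collapses to a single perfect square,
\[
(a^2+b^2)(\lambda^2+1) - (a\lambda+b)^2 = a^2 - 2ab\lambda + b^2\lambda^2 = (a - b\lambda)^2 \ge 0,
\]
which proves the inequality. Equality holds precisely when $a - b\lambda = 0$, \ie\ when $\lambda = a/b$, matching the asserted equality case.

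As a third route one can treat $f$ as a one-variable function and optimize it directly: computing $f'(\lambda)$ and clearing the common factor $(\lambda^2+1)^{-3/2}$ reduces the critical-point equation to $a(\lambda^2+1) - (a\lambda+b)\lambda = a - b\lambda = 0$, so the unique critical point is $\lambda = a/b$. Substituting back gives $f(a/b) = \sqrt{a^2+b^2}$, and since $f(0) = b$ and $f(\lambda) \to a$ as $\lambda \to \infty$, with both $a,b < \sqrt{a^2+b^2}$, this critical value is the global maximum.

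I do not anticipate any genuine obstacle here: the statement is elementary, and each of the three approaches is routine. The only point requiring care is the equality analysis — in particular, using the hypothesis $a,b,\lambda > 0$ to guarantee the denominators are nonzero and that squaring preserves the inequality — but this is bookkeeping rather than a real difficulty. I would present the perfect-square identity as the cleanest self-contained proof.
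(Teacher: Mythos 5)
Your first route is exactly the paper's own (implicit) proof: the paper introduces Fact~\ref{fact1} as ``an equivalent form'' of the displayed Cauchy--Schwarz inequality, obtained precisely by the specialization $x=\lambda$, $y=1$, so your one-line derivation and equality condition match it. The two additional self-contained arguments (the perfect-square identity $(a^2+b^2)(\lambda^2+1)-(a\lambda+b)^2=(a-b\lambda)^2$ and the calculus optimization) are also correct, but they are bonuses beyond what the paper does.
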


\paragraph{Notations and assumptions.}
Let $P$ be a planar point set and $G=(V,E)$ be a plane geometric graph
on vertex set $P$. For $p,q \in P$, 
$pq$ denotes the connecting segment and $|pq|$ denotes its Euclidean length.
The degree of a vertex (point) $p \in V$ is denoted by $\deg(p)$.
For a specific point set $P=\{p_1,\ldots,p_n\}$,
we denote the shortest path between $p_{s},p_{t}$ in $G$ consisting of
vertices in the order 
$p_{s},\ldots,p_{t}$ using $\rho(p_{s},\ldots,p_{t})$ and by
$|\rho(p_{s},\ldots,p_{t})|$ its total Euclidean length.
The graphs we construct have the property that no edge
contains a point of $P$ in its interior.

\section{The square lattice} \label{sec:square}

This section is devoted to the degree 3 and 4 dilation of the square
lattice. In~\cite{DG15b}, we showed that the degree 3 dilation of the
infinite square lattice is at most
$(7+5\sqrt{2})\,29^{-1/2}=2.6129\ldots$ Here we improve this upper
bound to $\delta_0 := (3+2\sqrt2) \, 5^{-1/2}=2.6065\ldots$ We
believe that this upper bound is the best possible, and so in this
section we present two degree $3$ spanners for the infinite square
lattice that attain this bound. Another possible candidate is
  presented in Section~\ref{sec:remarks}.  
\begin{theorem} \label{thm:square:3}
Let $\Lambda$ be the infinite square lattice. Then,
$$2.4142\ldots= 1+\sqrt{2} \leq \delta_0(\Lambda,3) \leq (2\sqrt{2}
+3) \, 5^{-1/2}=2.6065\ldots $$ 
\end{theorem}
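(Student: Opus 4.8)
The plan is to establish the two bounds separately, since they require entirely different techniques.

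For the \textbf{lower bound} $\delta_0(\Lambda,3) \geq 1+\sqrt{2}$, I would argue locally around a single lattice point. Fix a vertex $p$ of $\Lambda$ at the origin with its four axis-neighbors $n, s, e, w$ at unit distance. In any degree-$3$ plane spanner, $p$ can be incident to at most three of the edges to its four closest lattice neighbors, so at least one of these four unit-distance pairs, say $(p,e)$, is \emph{not} connected by a direct edge. The key step is to lower-bound $|\pi_G(p,e)|$ over all plane degree-$3$ graphs. Since no edge of $G$ may cross another and the graph is plane, any path from $p$ to $e$ that avoids the direct segment must route around through at least one neighboring lattice point; I would enumerate the shortest possible detours (e.g.\ $p \to n \to e$ or $p \to s \to e$, each of length $2$ via unit edges, versus diagonal-free routes) and show that the unavoidable detour forces a ratio of at least $1+\sqrt{2}$. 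The cleanest realization of the bound $1+\sqrt2$ comes from a pair where the shortest available path has length $1+\sqrt2$ relative to a unit gap, so I would identify the specific pair and detour that is tight; plausibly this is a pair at distance $1$ whose only short connection runs along two legs summing to $1+\sqrt2$, or a $\sqrt2$-separated diagonal pair whose detour is forced. Care is needed to argue this is genuinely forced in \emph{every} degree-$3$ plane spanner, not just in a particular construction, which is where an averaging or counting argument over the four neighbors of $p$ (at most three edges, so some neighbor is poorly served) does the real work.

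For the \textbf{upper bound} $\delta_0(\Lambda,3) \leq (3+2\sqrt2)\,5^{-1/2}$, the task is constructive: I would exhibit an explicit degree-$3$ plane spanning subgraph of $\Lambda$ — a periodic tiling pattern — and then bound its stretch factor. The plan is to define the edge set by a small repeating unit (a fundamental domain of a few lattice cells) that keeps every vertex at degree exactly $3$ while remaining connected and planar; this is the candidate alluded to before the theorem. Once the periodic graph is fixed, the stretch factor is determined by finitely many ``worst-case'' pairs up to the lattice symmetry and periodicity, so I would reduce the supremum over all pairs to a finite check: by periodicity and the spanner structure, the worst ratio is attained by relatively nearby pairs, and for far-apart pairs the path decomposes into periodic blocks whose ratio is dominated by the local worst case.

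The analytic heart of the upper bound is estimating the stretch for the asymptotic direction that maximizes the ratio, and this is where \textbf{Fact~\ref{fact1}} enters. For a pair $p,q$ whose shortest path in the construction moves $\lambda$ units in one direction and $1$ unit in a transverse direction via segments weighted $a$ and $b$, the ratio has the form $(a\lambda+b)/\sqrt{\lambda^2+1}$, which Fact~\ref{fact1} bounds by $\sqrt{a^2+b^2}$ with equality at $\lambda=a/b$. Matching this to the target value $(3+2\sqrt2)\,5^{-1/2}$, I expect the extremal path to combine axis-parallel steps with the inherent $\sqrt2$ cost of the pattern's ``staircase'' detours, yielding the coefficients whose root-sum-of-squares equals $(3+2\sqrt2)^2/5$. \textbf{The main obstacle} will be verifying that the proposed periodic construction never forces a ratio larger than this supremum for any pair — in particular confirming that the genuinely worst pair is the one aligned with the extremal direction $\lambda=a/b$ and that no awkward short-range pair (where boundary effects of the repeating block matter) exceeds it. This finite but delicate case analysis over the fundamental domain, rather than the asymptotic optimization, is the part requiring the most care.
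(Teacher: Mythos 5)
Your lower-bound plan is essentially the paper's argument, but you have hedged on a step that is in fact immediate, and you misstate the detour lengths. The pigeonhole is right: a degree-$3$ vertex $p$ misses at least one of its four unit axis-neighbors, say $e$. What closes the argument is not an averaging or enumeration over which edges happen to exist: since edges are straight segments with no lattice point in their interior, \emph{any} $p$--$e$ path other than the absent edge passes through some third lattice point $r$, and by the triangle inequality its length is at least $|pr|+|re| \geq \min_{r \neq p,e}\bigl(|pr|+|re|\bigr) = 1+\sqrt{2}$, the minimum being attained when $r$ is one of the four points at distance $1$ from one endpoint and $\sqrt{2}$ from the other. (Your candidate detours ``of length $2$ via unit edges'' such as $p\to n\to e$ do not exist in the lattice: $|ne|=\sqrt2$, so that route already costs $1+\sqrt2$.) This holds uniformly over all degree-$3$ graphs, which is exactly how the paper proves $\delta_0(\Lambda,3)\geq 1+\sqrt2$.

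The upper bound is where your proposal has a genuine gap, in two respects. First, you never specify the construction, and the explicit periodic degree-$3$ tiling together with its staircase path analysis \emph{is} the substance of the proof; a plan that says ``define the edge set by a small repeating unit'' proves nothing until the unit is exhibited and every vertex pattern analyzed. Second, and more importantly, your expectation that the constant $(3+2\sqrt2)\,5^{-1/2}$ emerges from the equality case of Fact~\ref{fact1} is incorrect. In the critical regime (pairs $q=(1,y)$ in the paper's first construction) the ratio is $f(\lambda)=(\lambda+1+\sqrt2)/\sqrt{\lambda^2+1}$ with $\lambda=1/y$, and the Cauchy--Schwarz bound $\sqrt{1+(1+\sqrt2)^2}=\sqrt{4+2\sqrt2}=2.6131\ldots$ is \emph{strictly larger} than $\delta_0=2.6065\ldots$; following your plan verbatim would only yield this weaker bound. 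The sharp constant comes from lattice discreteness: the continuous maximizer $\lambda_0=\sqrt2-1$ is not of the form $1/y$, and the flanking feasible values give $f(1/2)=f(1/3)=(3+2\sqrt2)\,5^{-1/2}$ exactly, which is how the paper pins down $\delta_0$ (Fact~\ref{fact1} is used only to dispose of the non-critical cases, e.g.\ $x\geq 2$, where the root-sum-of-squares bound is comfortably below $\delta_0$). Relatedly, your claim that the worst ratio is attained in the ``asymptotic direction'' with far pairs dominating is backwards: the tight pairs are nearby ones, e.g.\ $(x,y)=(1,2)$, and far pairs in the extremal direction have strictly smaller stretch.
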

\begin{proof}
 To prove the lower bound, consider any point $p_0 \in \Lambda$ and its
eight neighbors $p_1,\ldots,p_8$, as in Fig.~\ref{fig:lb_grid}. 
	\begin{figure}[hbtp]
		\centering
		\includegraphics[scale=0.42]{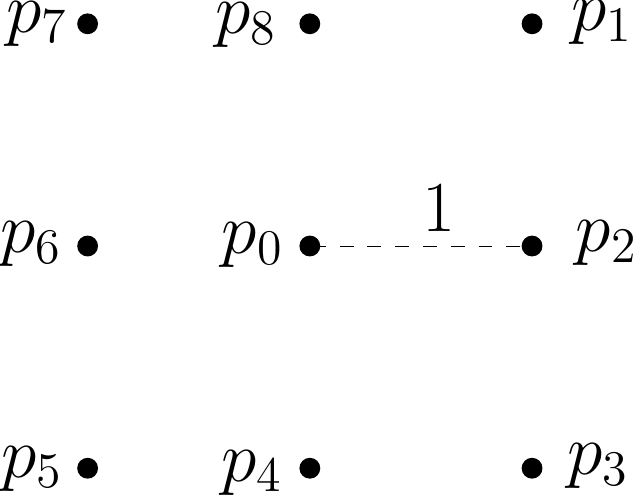}
		\caption{Illustrating the lower bound of $1 +
                  \sqrt{2}$ for the square lattice.} 
		\label{fig:lb_grid}
	\end{figure}
Since $\deg(p_0) \leq 3$, $p_0$
  can be connected to at most three neighbors from $\{p_2,p_4,p_6,p_8\}$.
 We may assume that the edge $p_0p_2$ is not present; then 
	\begin{equation*}
	  \delta(p_0,p_2) \geq \frac{|\rho(p_0,p_i,p_2)|}{|p_0p_2|} \geq 1 + \sqrt{2},
          \text{ where } i\in\{1,3,4,8\}.
	\end{equation*}

To prove the upper bound, we construct a plane degree 3 geometric
graph $G$ as illustrated in Fig.~\ref{fig:f27} (left); observe that there are four
types of vertices in $G$. For any two lattice points $p,q \in
\Lambda$, we construct a path in $G$. 
Set $p=(0,0)$ as the origin and consider the four quadrants 
$W_i$, $i=1,\ldots,4$, labeled counterclockwise in the standard
fashion; see Fig.~\ref{fig:f27} (right).
Points on the dividing lines are assigned arbitrarily to any of the two
adjacent quadrants.
By the symmetry of $G$, we can assume that $q$ lies in the first
quadrant, thus $q=(x,y)$, where $x,y \geq 0$, while the origin $p=(0,0)$ can
be at any of the four possible types of lattice points.

Consider the path from $p=(0,0)$ to $q=(x,y)$ via $(z,z)$, where $z=\min(x,y)$,
that visits every other lattice point on this diagonal segment as
shown in Fig.~\ref{fig:f6}, and let $\ell(x,y)$ denote its length.
If $x=0$, the stretch factor is easily seen to be at most $1+\sqrt2$. 
Since a horizontal path connecting two points with the same
$y$-coordinate at distance $a$ is always shorter 
than any path connecting two points with the same
$x$-coordinate at the same distance $a$, it is enough to prove our
bound on the stretch factor in the case $y \geq x$ (\ie, $z=x$).  
We thus subsequently assume that $y \geq x \geq 1$. 
\begin{figure}[ht]
	\centering
	\includegraphics[scale=0.42]{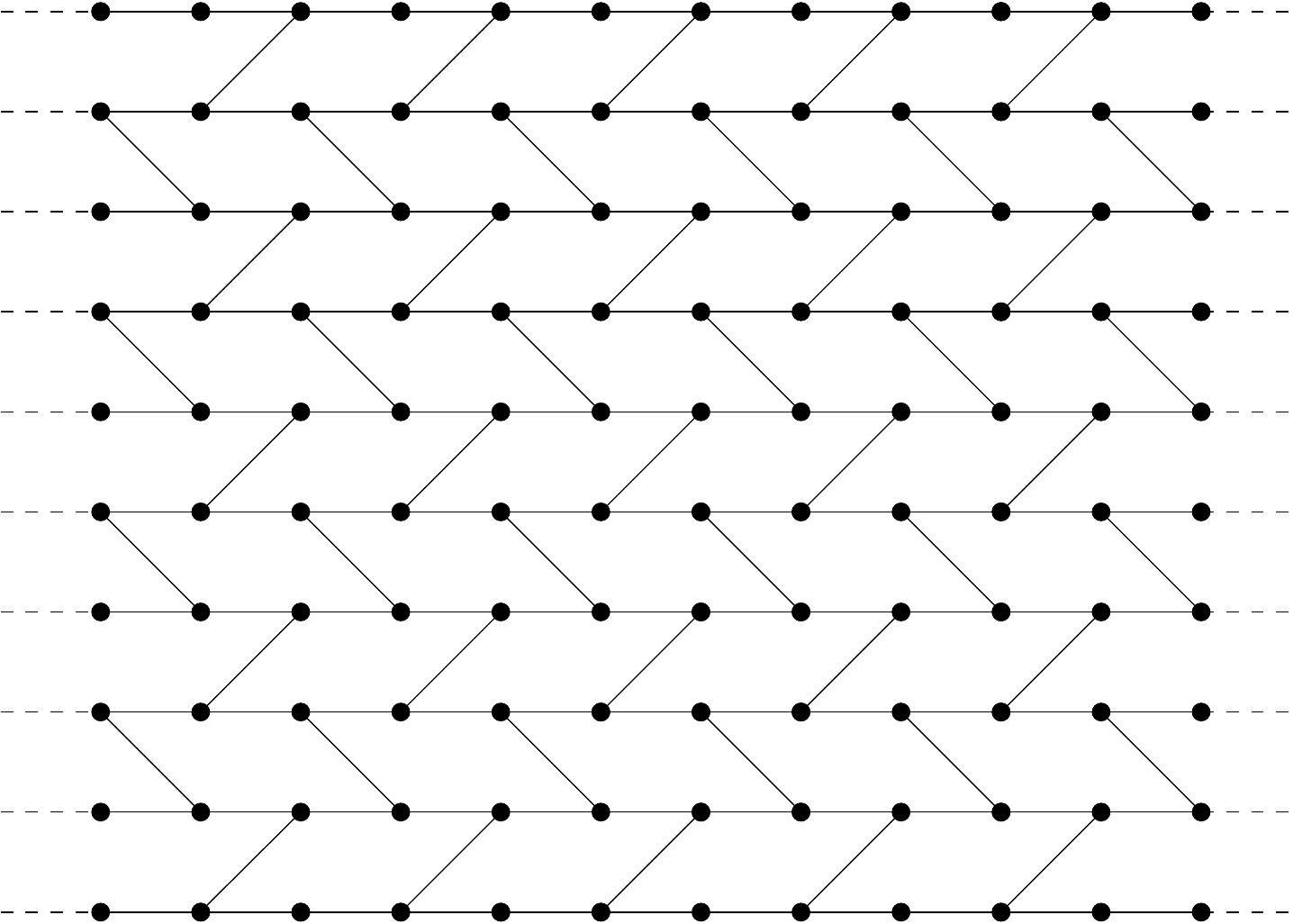}
	\hspace{10mm}
	\includegraphics[scale=0.5]{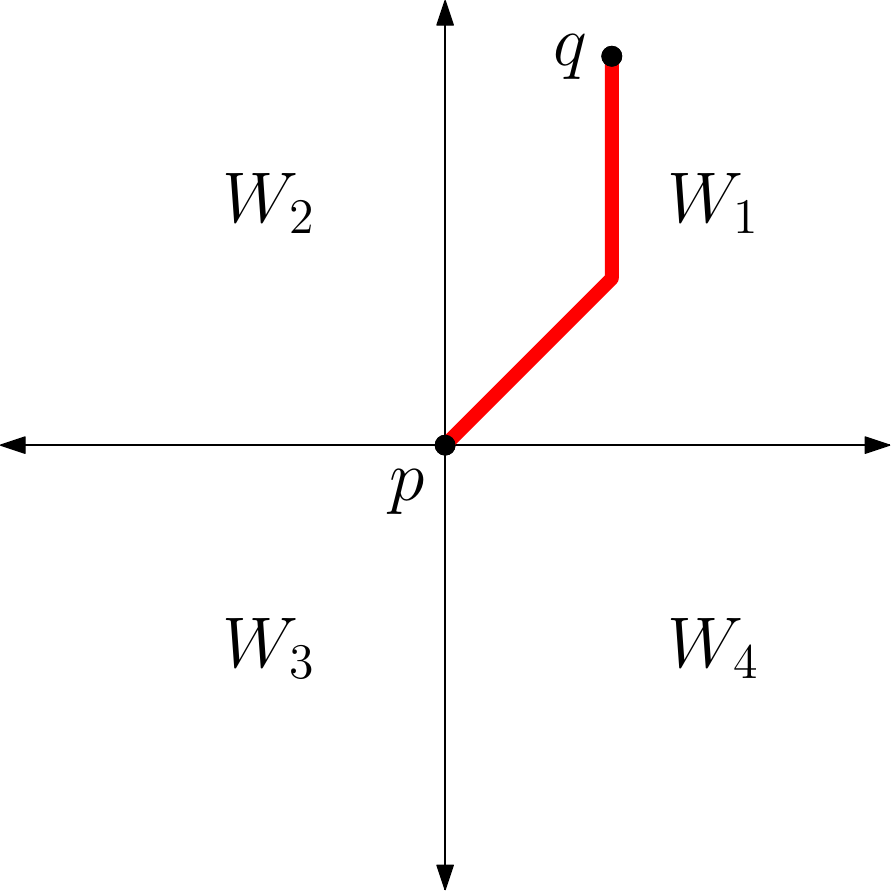}
	\caption{Left: a degree $3$ plane graph on $\Lambda$. Right: a
          schematic diagram showing the path between $p,q$ (when $x \leq y$).
          The bold path consist of segments of lengths $1$ and $\sqrt{2}$.} 
	\label{fig:f27}
\end{figure}

\begin{figure}[ht]
	\centering
	\includegraphics[scale=0.42]{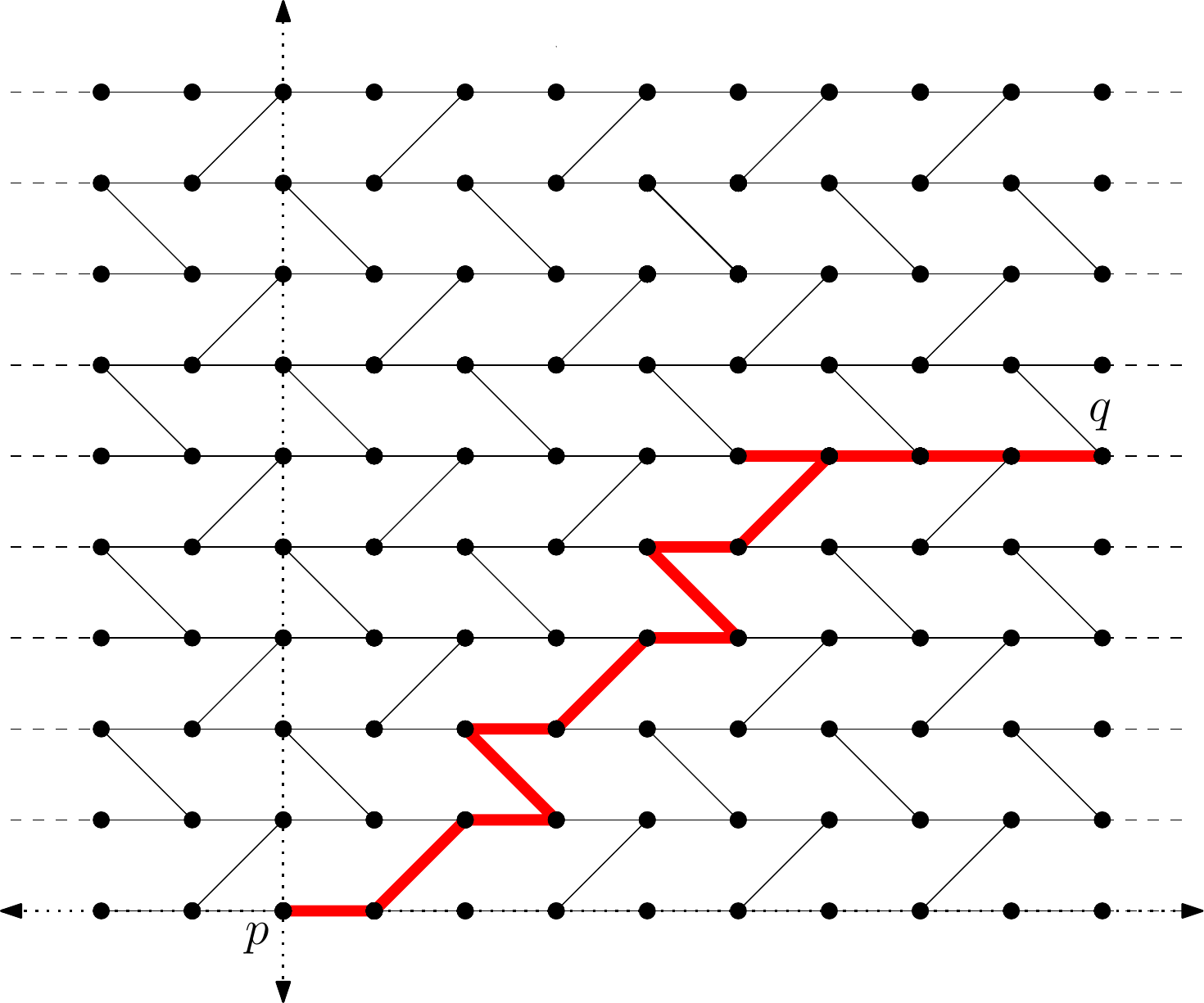}
	\hspace{10mm}
	\includegraphics[scale=0.42]{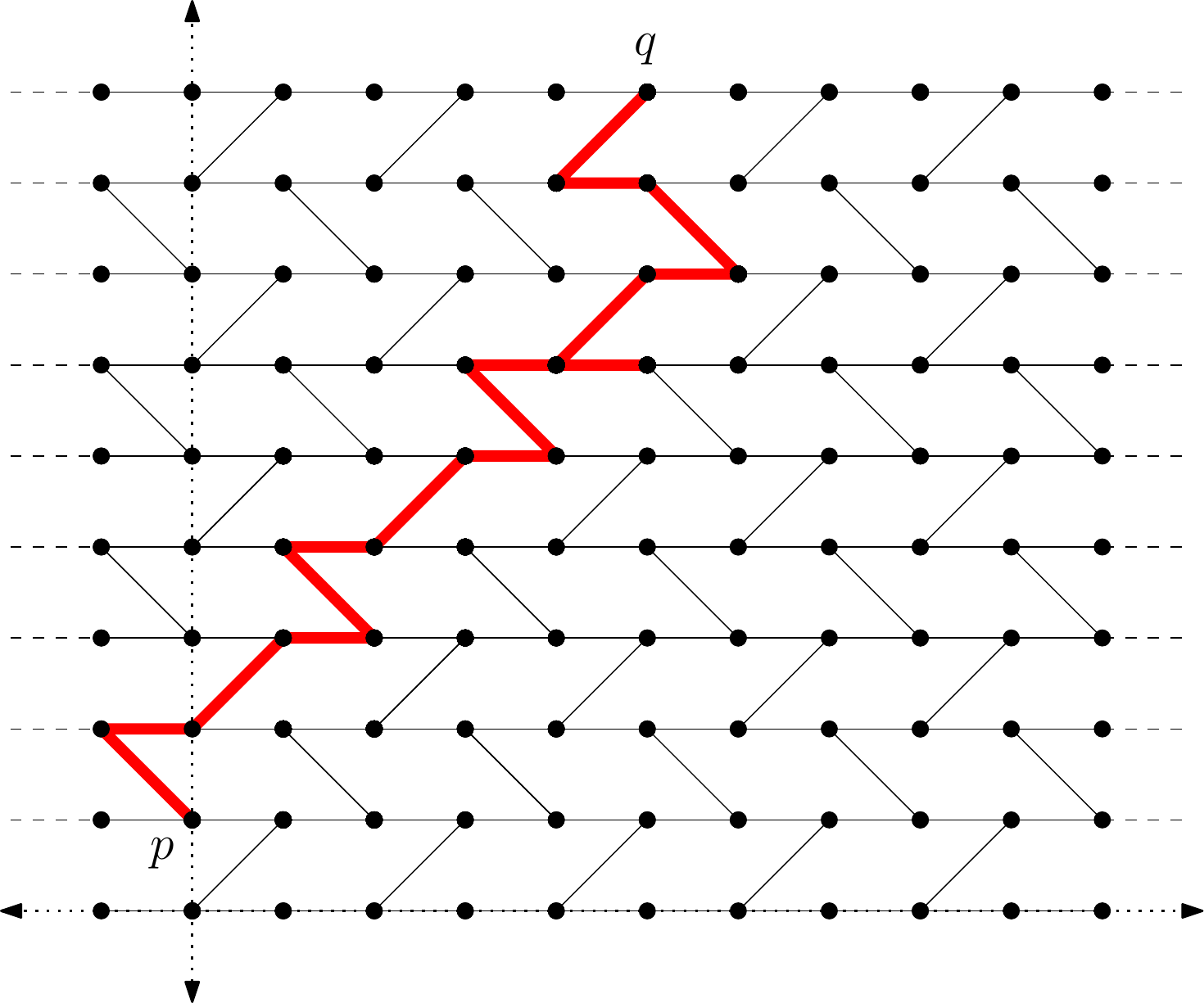}
	\caption{Paths connecting $p$ to $q$ in $G$ generated by the procedure
          outlined in the text. Observe that in both examples a unit horizontal edge
          is traversed in both directions (but can be shortcut).}
	\label{fig:f6}
\end{figure}

Observe that connecting points $(a,a)$ with $(a+2,a+2)$, for any $a \geq 0$, 
requires length $2+2\sqrt2$, and that
connecting points $(a,a)$ with $(a+1,a+1)$, for any $a \geq 0$, 
requires length at most $2+\sqrt2$. It follows that
\begin{align*} 
	\ell(x,y) &\leq \left( 2 \Big \lceil \frac{x}{2} \Big \rceil + \sqrt2 x \right) 
	+ (y-x) (1+\sqrt{2}) \nonumber \\
	&\leq 2 \left( \frac{x+1}{2} \right) + \sqrt{2} x + (y-x) (1+\sqrt{2})
	= 1+y(1+\sqrt{2}).
\end{align*}

Since $|pq|=\sqrt{x^2 + y^2}$,
the corresponding stretch factor is bounded in terms of $x,y$ as follows
\begin{equation} \label{eq:12}
	\delta(p,q) \leq \gamma(x,y) :=\frac{1+y(1+\sqrt{2})}{\sqrt{x^2 + y^2}}. 
\end{equation}

We now consider the case $x=1$ separately. Let $\lambda=\frac{1}{y}$, 
where $y=1,2,3,4,5,\ldots$, and so
$\lambda=1, \frac12, \frac13,  \frac14,  \frac15,\ldots \in (0,1)$. 
According to \eqref{eq:12} we have 
\begin{equation*} 
	\gamma(1,y) \leq \frac{1+y(1+\sqrt{2})}{\sqrt{y^2 +1}}
	= \frac{\lambda + 1+\sqrt{2} }{\sqrt{\lambda^2 +1}} =: f(\lambda). 
\end{equation*}
The derivative $f'$ vanishes at $\lambda_0= \frac{1}{\sqrt2 +1} = \sqrt2 -1= 0.4142\ldots$
On the interval $(0,1)$: $f$ is increasing on the interval $(0,\lambda_0)$ and
decreasing on the interval $(\lambda_0,1)$; 
it attains a unique maximum at $\lambda=\lambda_0$. 
Since $\lambda_0 \in (\frac13, \frac12)$, we have
$$ f(\lambda) \leq \max \left( f\left(\frac13\right), f\left(\frac12\right) \right)=
f\left(\frac13\right) = f\left(\frac12\right)= \delta_0 .$$

It remains to consider the case $x \geq 2$; according to \eqref{eq:12} we have
\begin{align*} 
	\delta(p,q) &\leq \frac{1+y(1+\sqrt{2})}{\sqrt{x^2+y^2}}
	\leq \frac{1+y(1+\sqrt{2})}{\sqrt{4+y^2}}\\
	&= \frac{(1+\sqrt{2})(y/2) +1/2}{\sqrt{(y/2)^2 +1}}
	\leq \sqrt{(1+\sqrt{2})^2 + 1/4} = 2.4654\ldots < \delta_0,
\end{align*}
where the last inequality follows from Fact~\ref{fact1} by setting $\lambda=y/2$.

This completes the case analysis. Observe that the above analysis is
tight since there are point pairs with $x=1,y=2$ having pairwise
stretch factor $\delta_0$. 
We have thus shown that for any $p,q \in \Lambda$,
we have $\delta(p,q) \leq (3+2\sqrt2) \, 5^{-1/2}$,
completing the proof of the upper bound, and thereby the proof of
Theorem~\ref{thm:square:3}. 
\end{proof}

\paragraph{Another degree $3$ spanner with stretch factor $\delta_0 =
  (3+2\sqrt2)\,5^{-1/2}$.}
The graph $G$ is illustrated in Fig.~\ref{fig:23} (left). For any two
lattice points $p,q \in \Lambda$, we construct a path in $G$. 
Set $p=(0,0)$ as the origin and consider the four quadrants 
$W_i$, $i=1,\ldots,4$, labeled counterclockwise in the standard
fashion; see Fig.~\ref{fig:23} (right). Points on the dividing lines
are assigned arbitrarily to any of the two adjacent quadrants. By the
symmetry of $G$, we can assume that $q$ lies in one of the first two
quadrants. 
\begin{figure}[ht]
	\begin{center}
		\includegraphics[scale=0.42]{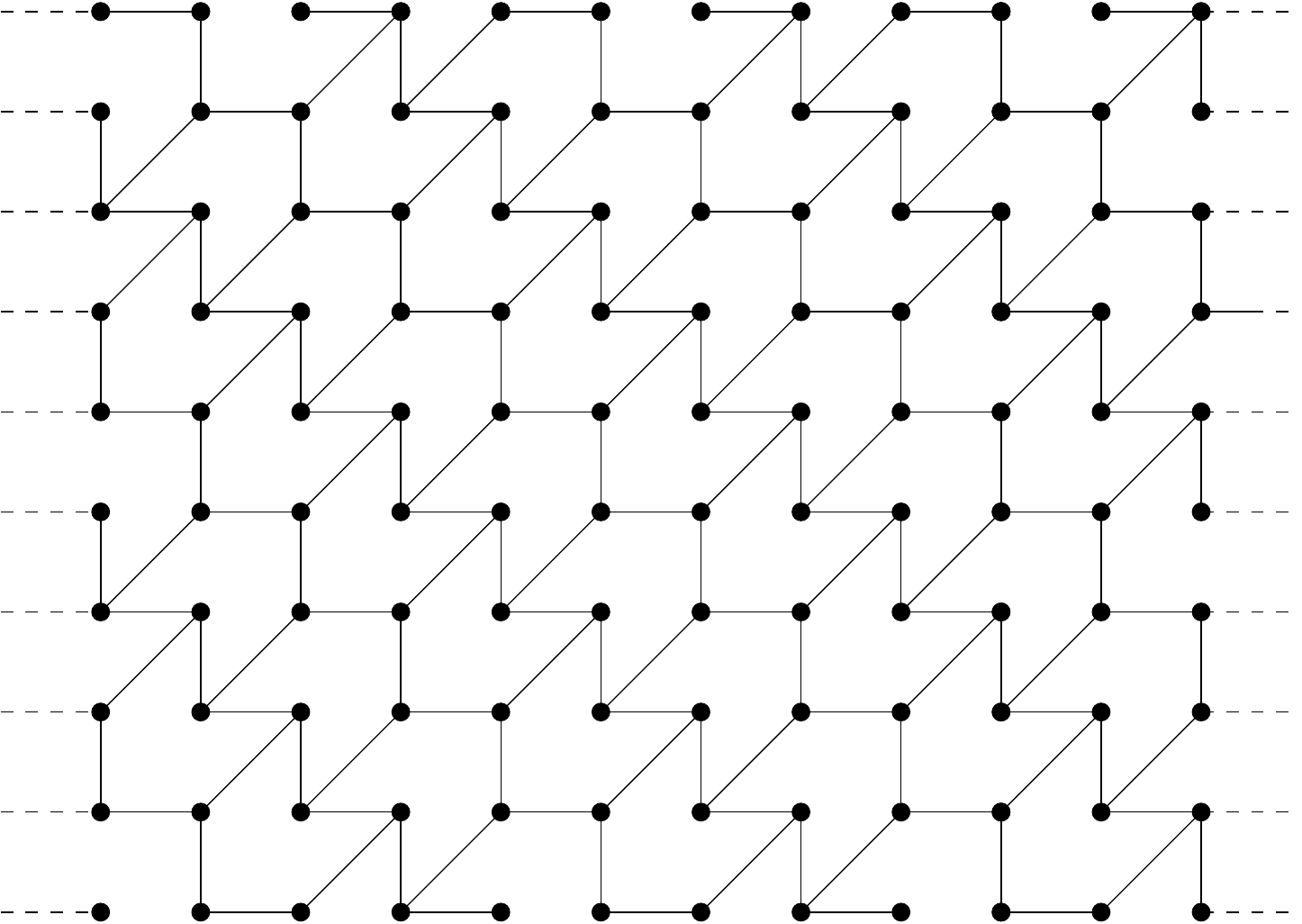}
		\hspace{10mm}
		\includegraphics[scale=0.42]{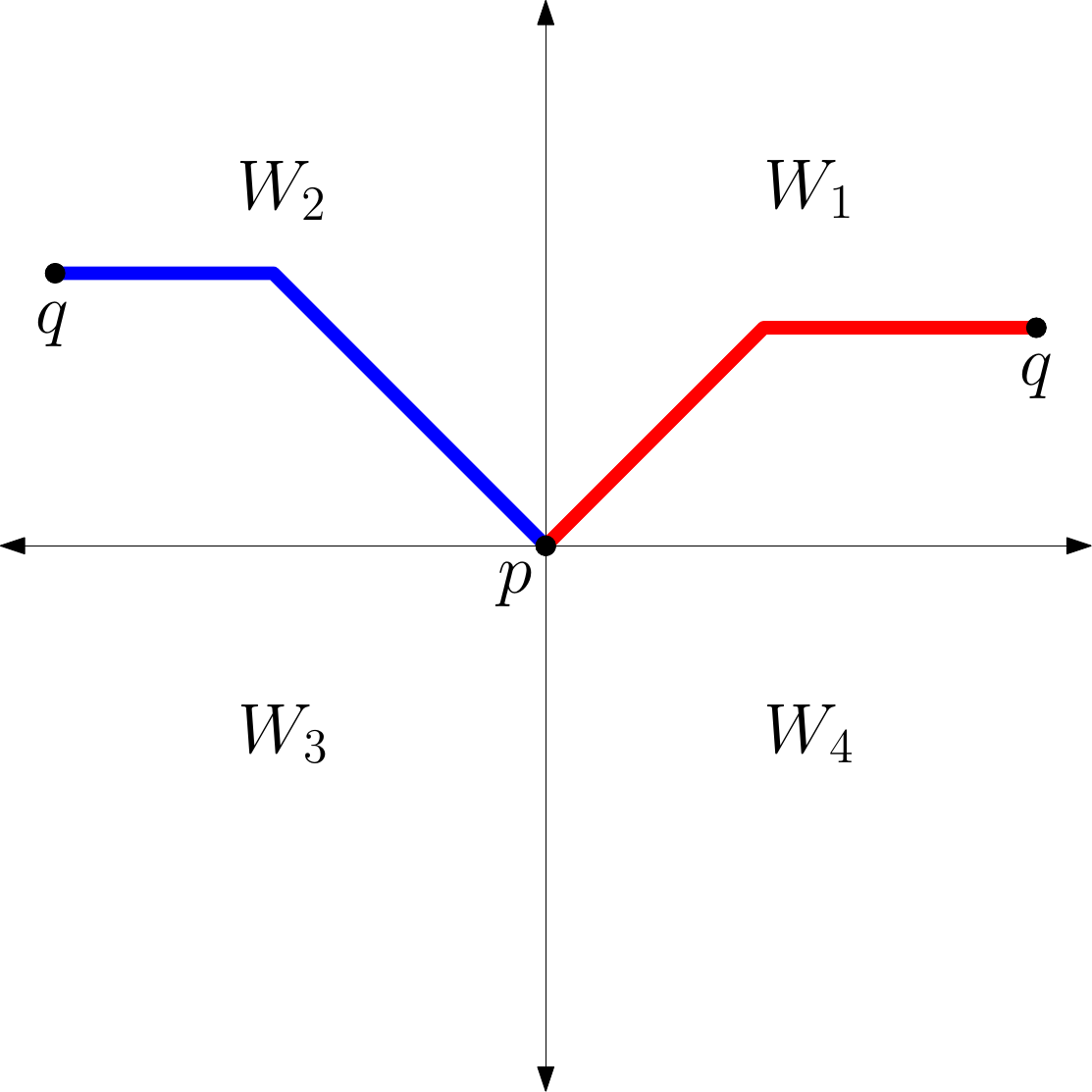}
	\end{center}
	\caption{Left: a degree $3$ spanner on $\Lambda$. Right:  a
          schematic diagram showing the path between $p,q$ when $q$
          lies in different quadrants of $p$ (when $y \leq x$).
          The bold paths consist of segments of lengths $1$ and $\sqrt{2}$. } 
	\label{fig:23}
\end{figure}

\smallskip 
\emph{Case 1}: $q \in W_1$. By the symmetry of $G$, 
we may assume in the analysis that $q=(x,y)$, where $0 \leq y \leq x$. 
Consider the path from $p$ to $q$ via $(y,y)$, that visits every 
lattice point on this diagonal segment as shown in Fig.~\ref{fig:25}
and let $\ell(x,y)$ denote its length. 

\begin{figure}[ht]
	\begin{center}
		\includegraphics[scale=0.42]{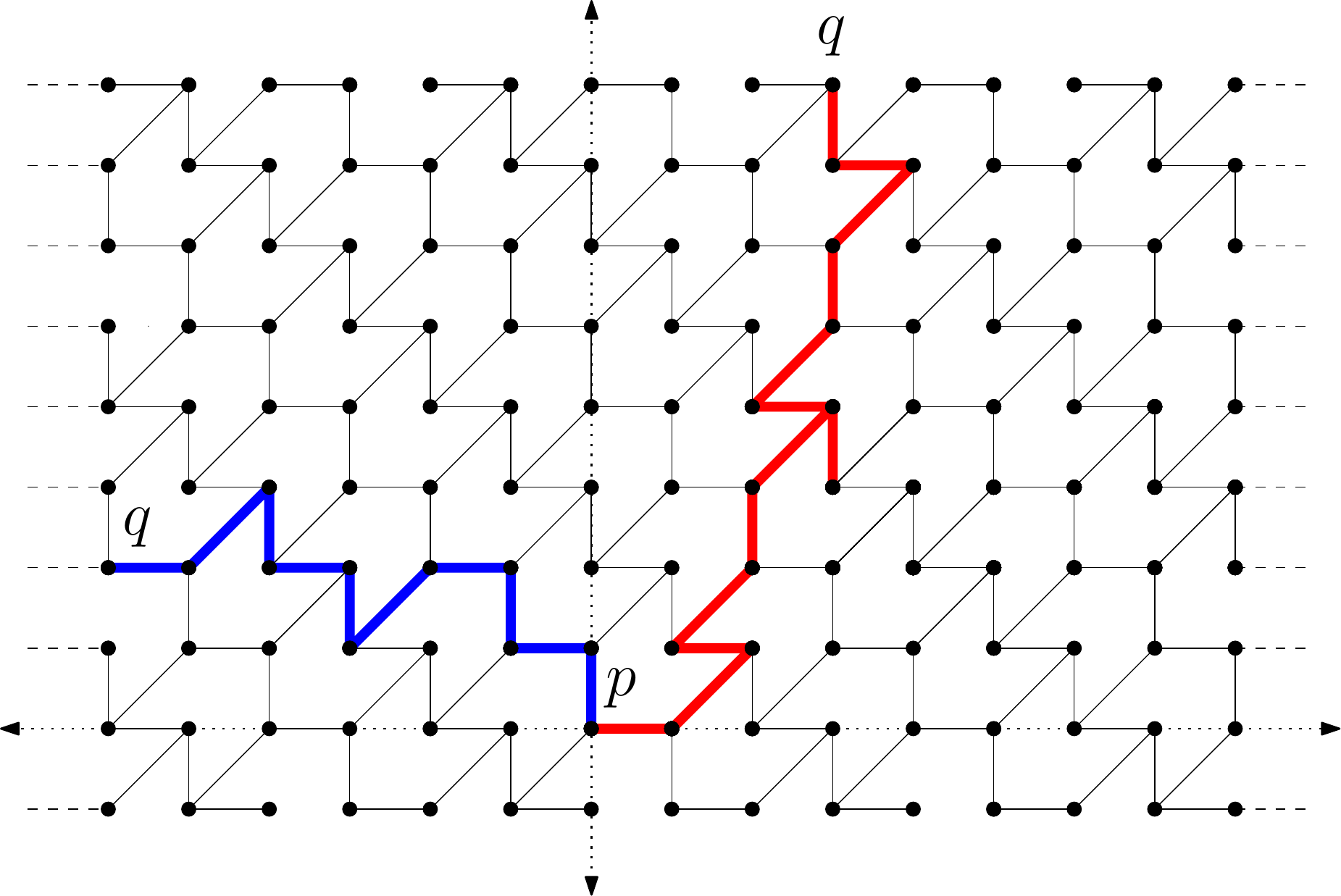}
		\hspace{2mm}
		\includegraphics[scale=0.42]{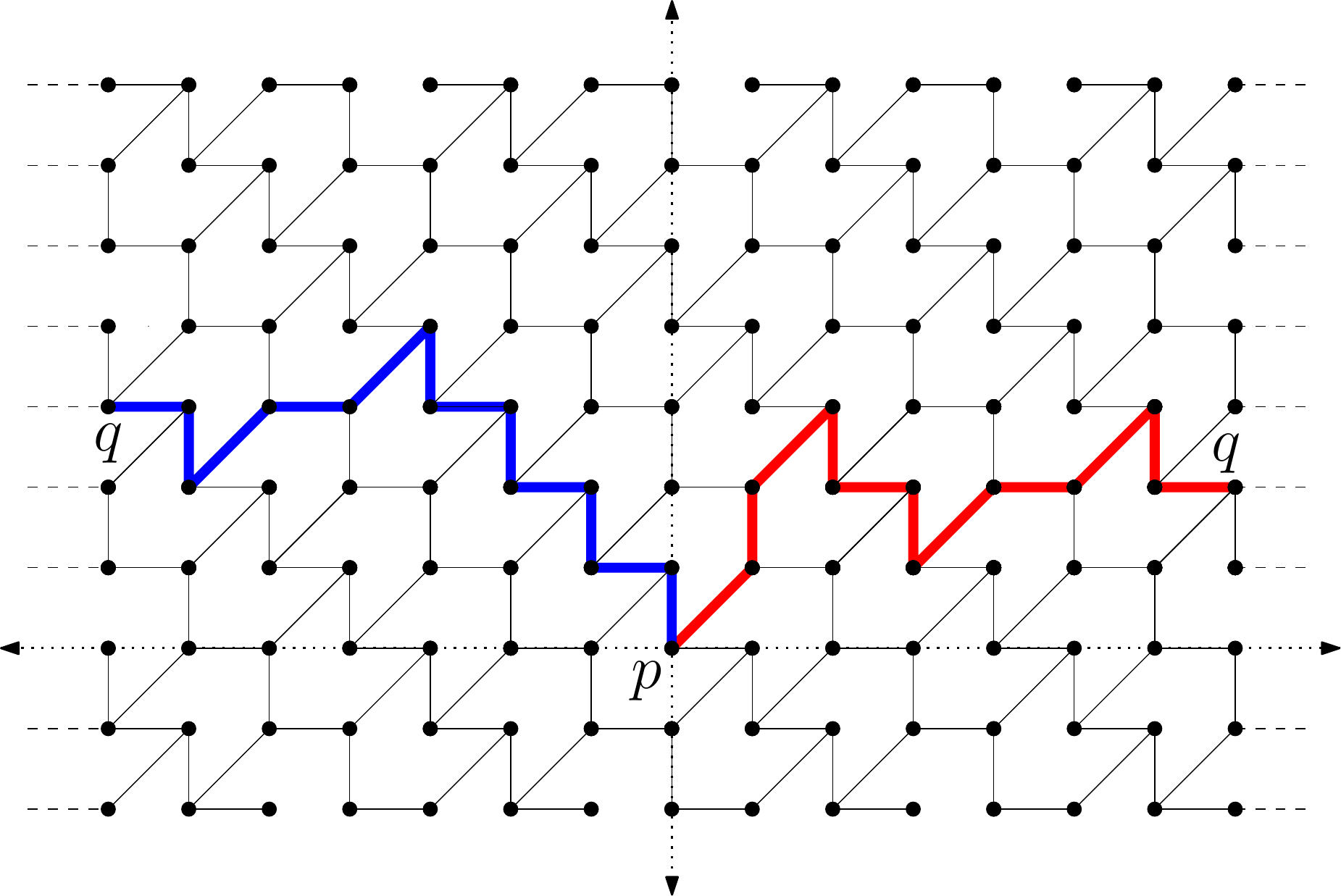}
	\end{center}
	\caption{Illustration of various paths from $p$ to $q$
          depending on the pattern of edges incident to $p$; for $q\in
          W_1$ (in red) and for $q\in W_2$ (in blue).
          Observe that in the red path on the left, 
          a unit vertical edge is traversed in both directions (but can be shortcut).} 
	\label{fig:25}
\end{figure}

If $y=0$, or $x=y$, it is easily checked that the stretch factor is
at most $1+\sqrt2$. Assume subsequently that $x \geq y+1$ and $y \geq
1$. A path of length $(2+2\sqrt{2})\lfloor y/2 \rfloor+(y \bmod
1)(2+\sqrt{2})$ suffices to reach from $p=(0,0)$ to $(y,y)$, and a
path of length $\lceil (x-y)/2 \rceil \sqrt{2} + (x-y)$ suffices to
reach from $(y,y)$ to $q=(x,y)$. Thus, 

$$ \ell(x,y) \leq (2+2\sqrt{2}) \bigg \lfloor \frac{y}{2} \bigg
\rfloor+(y \bmod 1)(2+\sqrt{2}) +  \bigg \lceil \frac{x-y}{2} \bigg
\rceil \sqrt{2} + (x-y). $$ 
That is,
\begin{numcases}{\ell(x,y)\leq }
(2+2\sqrt2) \frac{y}{2} + (x-y) +
\Big \lceil \frac{x-y}{2} \Big \rceil \sqrt{2}, & \text{ for even }$y$ \nonumber \\
(2+2\sqrt2) \frac{y-1}{2} + (2+\sqrt2) + (x-y) +
\Big \lceil \frac{x-y}{2} \Big \rceil \sqrt{2},  & \text{ for odd }$y$. \nonumber
\end{numcases}

The distance $|pq|$ equals $\sqrt{x^2 + y^2}$ in either case,
and so the corresponding stretch factor (bounded in terms of $x,y$) is
\begin{numcases}{\delta(p,q) \leq \gamma(x,y) := }
	\frac{\left(1 + \frac{\sqrt{2}}{2} \right) x +
          \frac{\sqrt{2}}{2} y + \frac{\sqrt{2}}{2}} 
	{\sqrt{x^2 + y^2}}, & \text{ for even }$y$ \label{eq14}\\
 \frac{\left(1 + \frac{\sqrt{2}}{2} \right) x + \frac{\sqrt{2}}{2} y 
 	+ \left( 1+ \frac{\sqrt{2}}{2} \right)}
 {\sqrt{x^2 + y^2}}, & \text{ for odd }$y$. \label{eq15}
\end{numcases}

Consider first the case of even $y$. Since the case $y=0$ has been dealt with, 
we have $y \geq 2$. Setting $\lambda =x/y$ in~\eqref{eq14} and using Fact~\ref{fact1}
in the last step yields 
\begin{equation*}
\delta(p,q) \leq \gamma(x,y) =
\frac{\left(1 + \frac{\sqrt{2}}{2} \right) \lambda +
  \frac{\sqrt{2}}{2}  + \frac{\sqrt{2}}{2y}}{\sqrt{\lambda^2 + 1}} 
\leq \frac{\left(1 + \frac{\sqrt{2}}{2} \right) \lambda + \frac{3\sqrt{2}}{4}}
{\sqrt{\lambda^2 + 1}} \leq \sqrt{\left(1 + \frac{\sqrt{2}}{2} \right)^2 + \frac98} 
< 2.01 < \delta_0.
\end{equation*}

Consider now the case of odd $y$. We have $y \geq 1$ and $x \geq y+1 \geq 2$.
By~\eqref{eq15} we have
$$ \gamma(x,1) =\frac{\left(1 + \frac{\sqrt{2}}{2} \right) x  + ( 1+\sqrt2)}
{\sqrt{x^2 + 1}} :=f(x). $$

We next show that $f$ is decreasing on the interval $[2, \infty)$. Indeed,
$ f'(x)= f_1(x)/(x^2 + 1)^{3/2}$, where
\begin{align*}
f_1(x) &= \left(1 + \frac{\sqrt{2}}{2} \right) (x^2+1)
-x\left[ \left(1 + \frac{\sqrt{2}}{2} \right)x + (1+\sqrt2) \right]\\
&=\left(1+ \frac{\sqrt{2}}{2} \right) -(1+\sqrt{2})x <0, 
\text { for } x \geq 2. 
\end{align*}

Consequently,
$$ \delta(p,q) \leq \gamma(x,1) = f(x) \leq f(2) =  \delta_0, $$
as required. Observe that the above analysis is tight 
for some point pairs with $x=2,y=1$ (that achieve stretch factor $\delta_0$). 

Consider now the remaining case $y \geq 3$. 
Setting $\lambda =x/y$ in (\ref{eq15}) and using Fact~\ref{fact1} in the last step yields
\begin{align*}
\delta(p,q) \leq \gamma(x,y) &\leq
\frac{\left(1 + \frac{\sqrt{2}}{2} \right) \lambda + \frac{\sqrt{2}}{2}  
+ \left( 1 + \frac{\sqrt{2}}{2} \right) \frac{1}{3}}
{\sqrt{\lambda^2 + 1}}
= \frac{\left(1 + \frac{\sqrt{2}}{2} \right) \lambda + \frac{1 + 2\sqrt2}{3}}
{\sqrt{\lambda^2 + 1}} \\
&\leq \sqrt{\left(1 + \frac{\sqrt{2}}{2} \right)^2 + \left( \frac{1 +
    2\sqrt2}{3} \right)^2}   
< 2.14 < \delta_0,
\end{align*}
as required.

\smallskip 
\emph{Case 2}: $q \in W_2$. We may assume that $q=(-x,y)$, where $x \geq y \geq 0$. 
Consider the path from $p$ to $q$ via $(-y,y)$, that visits every 
lattice point on this diagonal segment as shown in Fig.~\ref{fig:25}, 
and let $\ell(x,y)$ denote its length.
The distance $|pq|$ equals $\sqrt{x^2 + y^2}$.

If $y=0$, it is easily checked that the stretch factor is
at most $\sqrt2$, and so we assume subsequently that $y \geq 1$.  
The path length $\ell(x,y)$ is bounded from above as
\begin{align*}
\ell(x,y) &\leq 2y + (x-y) + \Big \lceil \frac{x-y}{2} \Big \rceil \sqrt2
\leq 2y + (x-y) + \frac{x-y+1}{2} \sqrt2 \\
&= \left(1 + \frac{\sqrt{2}}{2} \right) x + 
 \left(1 - \frac{\sqrt{2}}{2} \right) y + \frac{\sqrt{2}}{2}
\leq \left(1 + \frac{\sqrt{2}}{2} \right) x + y.
\end{align*}
Setting $\lambda =x/y$ and using Fact~\ref{fact1} in the last step yields
that the stretch factor is bounded as 
\begin{align*}
\delta(p,q) \leq \gamma(x,y) := 
\frac{\left(1 + \frac{\sqrt{2}}{2} \right)  x + y}{\sqrt{x^2 +y^2}}=
\frac{\left(1 + \frac{\sqrt{2}}{2} \right) \lambda + 1} {\sqrt{\lambda^2 + 1}} 
\leq \sqrt{\left(1 + \frac{\sqrt{2}}{2} \right)^2 +1} < \sqrt4 =2 < \delta_0, 
\end{align*}
as required. 
\smallskip

Next, we determine the degree $4$ dilation of the square lattice.
\begin{theorem}\label{thm:square:4}
  Let $\Lambda$ be the infinite square lattice.
  Then $\delta_0(\Lambda,4) = \sqrt{2}.$ 
\end{theorem}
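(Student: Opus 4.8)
The plan is to establish matching lower and upper bounds. The lower bound $\delta_0(\Lambda,4) \geq \sqrt{2}$ is the easier half, and I would argue it as follows. Consider any degree-$4$ plane spanner $G$ on $\Lambda$ and focus on a single lattice point $p_0$ together with its four axis neighbors and four diagonal neighbors, exactly as in the degree-$3$ lower bound (Fig.~\ref{fig:lb_grid}). Since $G$ is plane, no edge may cross another; in particular $p_0$ cannot be joined to any diagonal neighbor such as $(1,1)$ without an edge, so I instead examine a diagonal pair like $p_0 = (0,0)$ and $q = (1,1)$ whose Euclidean distance is $\sqrt{2}$. Any path from $p_0$ to $q$ in the plane graph that stays on lattice points must use at least two unit edges (one horizontal step and one vertical step are forced, since $p_0q$ itself is not a lattice edge and the graph contains only segments between lattice points with no point in the interior). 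Hence $|\rho(p_0,\ldots,q)| \geq 2$, giving $\delta(p_0,q) \geq 2/\sqrt{2} = \sqrt{2}$, and therefore $\delta(G) \geq \sqrt{2}$ for every admissible $G$.

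For the upper bound $\delta_0(\Lambda,4) \leq \sqrt{2}$, the plan is to exhibit an explicit degree-$4$ plane triangulation of $\Lambda$ and verify that every pair of points is connected by a path of length at most $\sqrt{2}\,|pq|$. The natural candidate is the \emph{diagonally triangulated grid}: keep all horizontal and vertical unit edges, and in each unit square add exactly one diagonal, chosen with a consistent orientation (say the diagonal from $(i,j)$ to $(i+1,j+1)$ when $i+j$ is even, and from $(i+1,j)$ to $(i,j+1)$ otherwise, or any pattern that keeps every vertex degree at most $4$ while preserving planarity). Each vertex is then incident to at most its four axis edges plus the selected diagonals; the orientation pattern must be arranged so that the diagonal contributions never push the degree above $4$. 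I would set up coordinates with $p=(0,0)$ and, by the symmetry of the lattice, reduce to $q=(x,y)$ with $0 \leq y \leq x$. The shortest monotone staircase from $p$ to $q$ using the diagonals consists of $y$ diagonal steps of length $\sqrt{2}$ reaching $(y,y)$, followed by $x-y$ horizontal unit steps, for a total path length $\ell(x,y) \leq \sqrt{2}\,y + (x-y)$.

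The decisive computation is then to bound the resulting stretch factor. Setting $\lambda = x/y$ (with $y \geq 1$; the case $y=0$ gives a purely horizontal path of stretch $1$), I obtain
\begin{equation*}
\delta(p,q) \leq \frac{(x-y) + \sqrt{2}\,y}{\sqrt{x^2+y^2}}
= \frac{\lambda + (\sqrt{2}-1)}{\sqrt{\lambda^2+1}}.
\end{equation*}
Applying Fact~\ref{fact1} with $a=1$ and $b=\sqrt{2}-1$ gives the uniform bound $\delta(p,q) \leq \sqrt{1 + (\sqrt{2}-1)^2} = \sqrt{4-2\sqrt{2}} = 1.0823\ldots$, which is comfortably below $\sqrt{2}$. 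Combined with the degenerate axis-aligned and pure-diagonal cases (both of stretch exactly $1$), this shows $\delta(G) \leq \sqrt{2}$, matching the lower bound.

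The main obstacle I anticipate is not the stretch-factor arithmetic, which follows cleanly from Fact~\ref{fact1}, but rather the \emph{degree and planarity bookkeeping} for the diagonal pattern: one must choose the orientation of the added diagonals so that no lattice point accumulates more than four incident edges while still leaving every unit square triangulated and every pair connected by a short staircase. A naive all-same-direction diagonal choice leaves each interior vertex with degree $6$; the fix is to include only \emph{half} the diagonals in an alternating or brick-like pattern and to check that the remaining staircase routes still achieve a length of the form $\sqrt{2}\,y + (x-y)$ (or its symmetric variant) for every target $q$. Verifying that this reduced diagonal set simultaneously respects the degree-$4$ cap and still admits a monotone $\sqrt{2}$-stretch path in every direction is the crux of the construction, and I would devote the bulk of the figure-based case analysis to confirming it.
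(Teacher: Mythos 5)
Your proposal has genuine gaps in both halves. For the lower bound, you assert that the diagonal $p_0q$ from $(0,0)$ to $(1,1)$ is ``not a lattice edge'' and hence absent, but nothing in the problem forbids it: an edge of a geometric graph on $\Lambda$ may be any segment joining two lattice points with no lattice point in its interior, and the unit-square diagonal qualifies (indeed, the degree-$3$ spanners of Section~\ref{sec:square} are built largely from length-$\sqrt{2}$ diagonal edges). The correct argument, which is the one the paper uses, rests on a crossing observation: the two diagonals of a single unit square cross at its center, so in a plane graph at most one of them can be present; for the diagonal pair lacking its direct edge, every connecting path through lattice points has length at least $2$, giving stretch at least $2/\sqrt{2}=\sqrt{2}$. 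Your argument can be repaired this way, but as written the key step --- why the direct edge must be missing --- is unjustified.

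For the upper bound the gap is fatal to your construction, not merely to its write-up. If you keep \emph{all} horizontal and vertical unit edges, then every vertex already has degree exactly $4$, so adding \emph{any} diagonal --- even one --- pushes some vertex to degree $5$; no alternating or brick-like selection of ``half the diagonals'' can avoid this, so the diagonally triangulated grid you describe does not exist under the degree-$4$ cap, and the staircase bound $\ell(x,y)\leq \sqrt{2}\,y+(x-y)$ has no graph to live in. Your own numbers should have raised an alarm: the bound $\sqrt{4-2\sqrt{2}}=1.08\ldots$ is strictly below the lower bound $\sqrt{2}$ you had just argued, a contradiction, since no plane graph on $\Lambda$ of any degree has stretch below $\sqrt{2}$. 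The paper's construction is the far simpler one you discarded: take only the axis edges (the plain grid graph), which is $4$-regular and plane, and bound every pair by its Manhattan path, giving
\begin{equation*}
\delta(p,q) \leq \frac{x+y}{\sqrt{x^2+y^2}} \leq \sqrt{2}.
\end{equation*}
That single inequality finishes the upper bound; no diagonals are needed, and none are possible.
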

\begin{proof} Trivially, the (unrestricted degree) dilation of
  four points placed at the four corners of a square is
  $\sqrt{2}$. Thus, $\delta_0(\Lambda) \geq \sqrt{2}$.  
To prove the upper bound, construct a $4$-regular graph $G$ on $\Lambda$ 
by connecting every $(i,j) \in \Lambda$ with its four neighbors
$(i+1,j),(i,j+1),(i-1,j),(i,j-1)$. For any two points $p,q \in \Lambda$,
the Manhattan path connecting them yields a stretch factor of the form 
$$ \frac{x+y}{\sqrt{x^2 + y^2}} \leq \sqrt{2}, \text{ where } x,y \in \NN, $$
as required.
\end{proof}

\paragraph{Remark.} It can be checked that the upper and lower bounds in
Theorem~\ref{thm:square:4} hold for every degree $k \geq 4$.
Thus, $\delta_0(\Lambda,k)=\sqrt{2}$ for $k\geq 4$.

\section{The hexagonal lattice} \label{sec:hexagonal}

This section is devoted to the degree 3 and 4 dilation of the
hexagonal lattice. In~\cite{DG15b}, we  showed that the degree 3
dilation of the infinite hexagonal lattice is between $2$ and
$3$. Here we establish that the exact value is $1+\sqrt{3}$. 
\begin{theorem} \label{thm:hexagonal:3}
Let $\Lambda$ be the infinite hexagonal lattice. Then
$\delta_0(\Lambda,3) = 1+\sqrt{3}.$ 
\end{theorem}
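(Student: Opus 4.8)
The plan is to prove matching lower and upper bounds. For the \textbf{lower bound} $\delta_0(\Lambda,3)\ge 1+\sqrt3$, I would first record the elementary geometric fact that, because an edge may not pass through a lattice point, every edge of a plane graph on $\Lambda$ has length in $\{1,\sqrt3,\sqrt7,\dots\}$, and consequently the only $p$--$q$ paths of length less than $1+\sqrt3$ joining two \emph{adjacent} points $p,q$ (at unit distance) are the two ``triangle detours'' $p\to c\to q$ through a common nearest neighbour $c$, each of length exactly $2$. Hence: \emph{if a unit edge $pq$ is absent and neither triangle detour is present, then} $\delta(p,q)\ge 1+\sqrt3$, the next admissible route (e.g. $p\to c'\to q$ with $|pc'|=\sqrt3$, $|c'q|=1$) having length $\ge 1+\sqrt3$. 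Fix any point $p_0$ and its six nearest neighbours $n_0,\dots,n_5$ in cyclic order. Since $\deg(p_0)\le 3$, at least three spokes $p_0n_i$ are absent; a one-line check shows that if any three \emph{consecutive} spokes are absent then the middle one has no triangle detour (both its common neighbours are reachable only through absent spokes), already forcing stretch $1+\sqrt3$.

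It then remains to rule out the configurations in which every absent spoke still has a triangle detour at every vertex, which I would do with a global parity argument. Requiring a triangle detour for each missing unit edge forces (in the limit of a large region, via edge/triangle counting using $|E|\le\tfrac32|V|$) that the present unit edges meet every triangular face of $\Lambda$ in an \emph{even} number of edges; on the simply connected lattice this means the present edges form the cut of a two-colouring of the vertices, and the detour requirement makes this cut essentially $3$-regular. But the six neighbour directions split into three antipodal pairs, so any vertex cut meets an \emph{even} number of the six incident edges, never exactly three. This contradiction exhibits a missing edge with no triangle detour, hence a pair with $\delta\ge 1+\sqrt3$. The \textbf{main obstacle} is exactly this last step: converting the per-vertex detour requirement into the clean ``even on every face'' (cut) condition and pinning down the resulting rigidity, so that the antipodal-parity obstruction applies.

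For the \textbf{upper bound} $\delta_0(\Lambda,3)\le 1+\sqrt3$, I would mirror the square-lattice proofs: exhibit one explicit periodic degree-$3$ plane graph $G$ on $\Lambda$ (displayed in a figure), built from unit edges together with some next-nearest ($\sqrt3$) edges and invariant under a large symmetry group, then use that symmetry to reduce to a canonical pair $p=(0,0)$, $q=(x,y)$ lying in a single sector. For this canonical pair I would route through a staging point on a lattice line, bound the path length $\ell(x,y)$ from above by an affine function $\alpha x+\beta y+\gamma$, and then bound $\ell(x,y)/\sqrt{x^2+y^2}$ by $1+\sqrt3$ via Fact~\ref{fact1} (setting $\lambda=x/y$ or $\lambda=y/x$), handling by direct computation the finitely many small cases where the affine estimate is loose. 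The construction is designed so that the extremal pair is a unit-distance pair whose connecting edge is omitted and which is instead routed as $p\to c\to q$ with $|pc|=\sqrt3$ and $|cq|=1$, attaining the value $1+\sqrt3$ exactly and thereby matching the lower bound.
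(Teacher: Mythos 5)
Your upper bound plan coincides with the paper's (an explicit periodic degree-$3$ graph built from unit and $\sqrt3$ edges, reduction to a single $60^\circ$ wedge by symmetry, affine upper bounds on path length, then Fact~\ref{fact1}), and that method does work; but as written it is only a template, since the explicit graph --- which is the real content of the upper bound --- is never exhibited. The genuine gap is in your lower bound, at the ``antipodal parity'' step. The claim that \emph{any vertex cut meets an even number of the six incident edges} is false: you have the cycle-space/cut-space duality backwards. It is the elements of the \emph{cycle} space that have even degree at every vertex; a \emph{cut} only has even intersection with every cycle, and it can meet a vertex in an odd number of edges. Concretely, color one lattice point black and all others white: the cut is the star of six edges at that point, and each of its six neighbours meets this cut in exactly \emph{one} edge. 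The antipodal pairing of the six directions gives you nothing, because for a general two-colouring $\phi$ there is no reason that $\phi(n_i)=\phi(n_{i+3})$, which is what your evenness claim amounts to. So the contradiction you need never materializes.

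Moreover, the configurations your parity step was supposed to eliminate are precisely the hard ones, and they satisfy every local condition you impose: if the present spokes at a vertex form the alternating pattern ($n_0,n_2,n_4$) or the mixed pattern ($n_0,n_1,n_3$), then every missing spoke at that vertex has a triangle detour, so no per-vertex argument (and no false parity obstruction) can finish the proof. These are exactly the paper's Cases~2.2 and~2.3 in the proof of Theorem~\ref{thm:hexagonal:3}, and the paper kills them by \emph{propagation}: forced edges are traced through several neighbouring vertices (the points $p_1$, $p_5$, $p_6$, $p_{16}$, $p_{17}$, \dots) until a vertex with three consecutive missing spokes, or an overloaded vertex, appears, yielding stretch at least $3$. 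A secondary soft spot: your edge/triangle counting gives the ``$0$ or $2$ present edges per triangle'' condition only up to an $o(n)$ density of exceptional triangles (the boundary slack of a large region), not for \emph{every} triangle, so even the reduction to a two-colouring cut is not yet rigorous. By contrast, the paper's lower bound needs neither the counting nor the cut structure: it first disposes of any edge of length at least $\sqrt3$ by a planarity argument, and then runs the local case analysis with propagation on unit-edge graphs; to make your approach correct you would have to replace the parity step by an argument of essentially that kind.
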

\begin{proof} 
{\textbf{Lower bound.}} Consider a section of the lattice as shown in
Fig.~\ref{fig:f10} (left). 
\begin{figure}[ht]
	\begin{center}
		\includegraphics[scale=0.7]{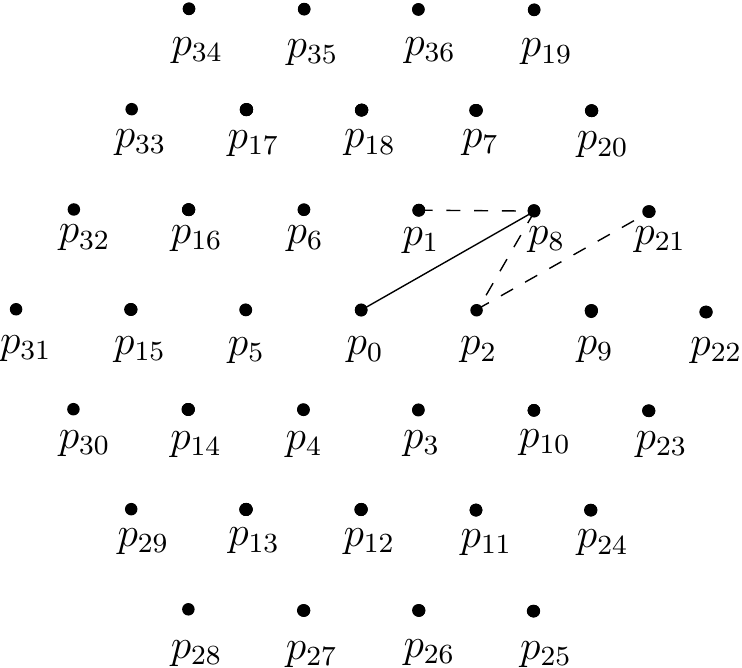}
	\end{center}
	\caption{If an edge of length $\sqrt{3}$ is present, then the
          stretch factor of any plane degree $3$ graph is $\geq
          1+\sqrt{3}$.} 
	\label{fig:f10}
\end{figure} First, we will show that if  an edge of length at least
$\sqrt{3}$ is present, the stretch factor of any resulting plane
degree $3$ graph is at least $1+\sqrt{3}$. Now assume, as we may, that
the edge $p_0p_8$ of length $\sqrt{3}$ is present. Now consider the
point pair $p_1,p_2$. Clearly, $|p_1p_2|=1$. It is easy to check that
between $p_1,p_2$, there are two shortest detours each of length $2$,
viz. $\rho (p_1,p_8,p_2)$ and $\rho (p_1,p_0,p_2)$. The next largest
detours $\rho (p_1,p_8,p_9,p_2)$ and $\rho (p_1,p_0,p_3,p_2)$ have
length $3$ each, in which cases, $\delta(p_1,p_2)\geq 3$. Hence,
without loss of any generality, consider $\rho (p_1,p_8,p_2)$, and
assume that the edges $p_1p_8$ and $p_2p_8$ are present. Then, 
$$\delta(p_8,p_{21}) \geq \frac{|\rho(p_8,p_2,p_{21})|}{|p_8p_{21}|} \geq 1+\sqrt{3}. $$

A similar argument can be made for any edge $e$ of length greater than
$\sqrt{3}$, since one can always locate two lattice points lying in
opposite sides of $e$; as required in the above analysis.  In the
remaining part of the proof, assume that no edge of length
$\sqrt{3}$ or more is present. In particular, we will only consider
unit length edges in our proof. Note that if every point in $\Lambda$
has degree 1 in the graph, we have a matching on $\Lambda$, and hence
the graph is disconnected. Thus, let $p_0$ be any point in $\Lambda$
with degree at least $2$. We have the following two\footnote{As
  mentioned in Section~\ref{sec:intro}, one can argue that degree $3$
  is needed for achieving a constant stretch factor. Thus, it is enough
  to analyze the case when $\deg(p_0)=3$. Nevertheless, we include a
  complete argument.} cases: 
\smallskip

\emph{Case 1:} $\deg(p_0) =2$. There are $3$ non-symmetric sub-cases as follows. 
\begin{figure}[ht]
\begin{center}
\includegraphics[scale=0.7]{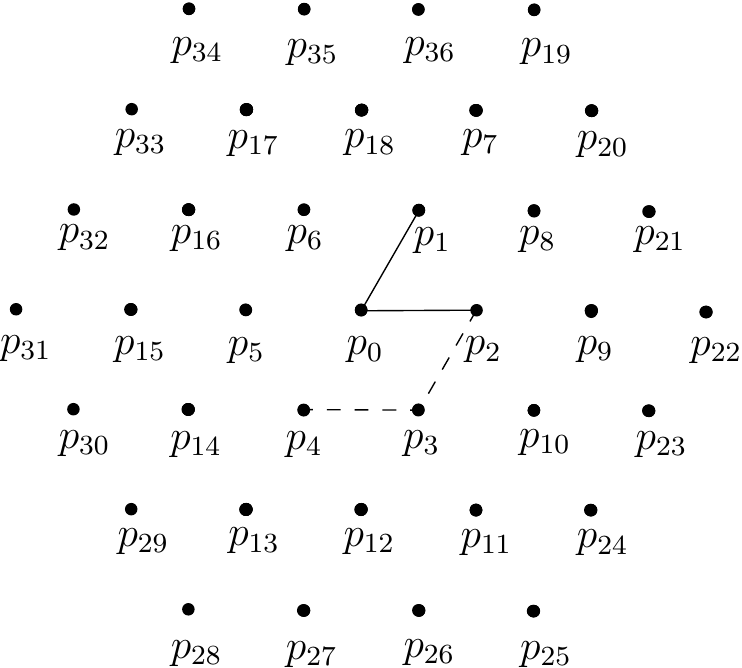}\hspace{2mm}
\includegraphics[scale=0.7]{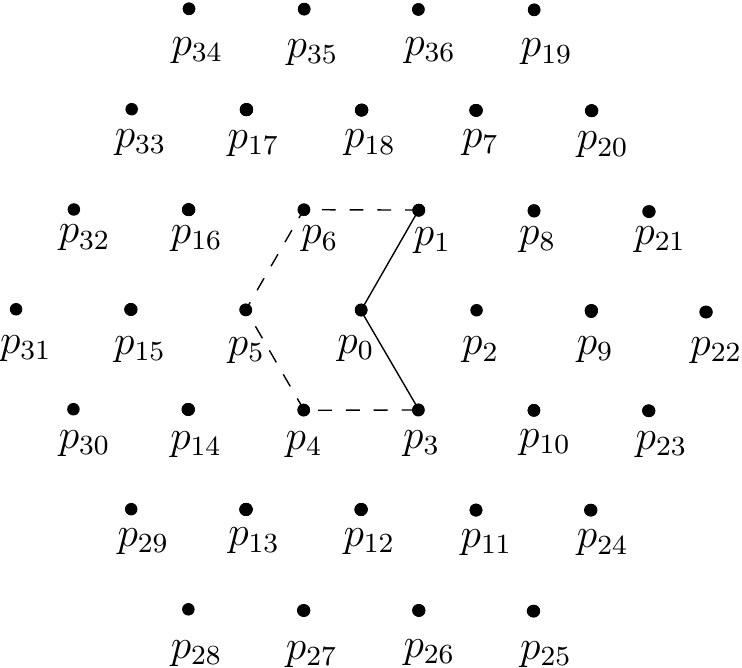}\hspace{2mm}
\includegraphics[scale=0.7]{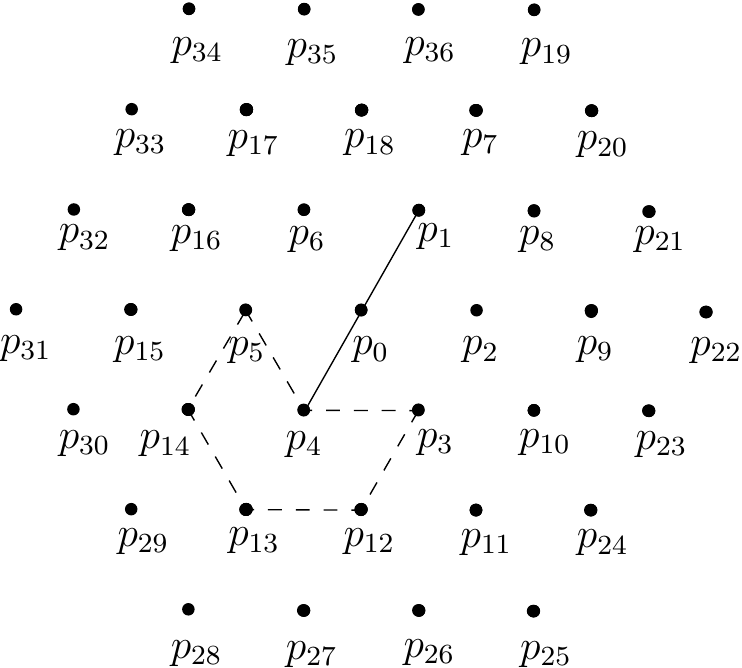}
\end{center}
\caption{Illustration of \emph{Case 1} from the proof of lower bound
  in Theorem~\ref{thm:hexagonal:3}. Left: \emph{Case 1.1}, Middle:
  \emph{Case 1.2}, Right: \emph{Case 1.3}.} 
\label{fig:f11}
\end{figure}

\smallskip
\emph{Case 1.1:} Refer to Fig.~\ref{fig:f11} (left). Let the edges
$p_0p_1$, $p_0p_2$ be present. Then,  
$$ \delta(p_0,p_4) \geq \frac{|\rho(p_0,p_2,p_3,p_4)|}{|p_0p_4|} \geq 3. $$

\emph{Case 1.2:} Refer to Fig.~\ref{fig:f11} (middle). Now, let the
edges $p_0p_1$, $p_0p_3$ be present. Then, 
$$ \delta(p_0,p_5) \geq \frac{|\rho(p_0,p_3,p_4,p_5)|}{|p_0p_5|} =
\frac{|\rho(p_0,p_1,p_6,p_5)|}{|p_0p_5|} \geq 3. $$ 

\emph{Case 1.3:} Refer to Fig.~\ref{fig:f11} (right). Let the edges
$p_0p_1$, $p_0p_4$ be present. Note that if the edge $p_3p_4$ is
absent, $\delta(p_0,p_3) \geq 3$. So, assume that $p_3p_4$ is
present. Similarly let $p_4p_5$ be present otherwise $\delta(p_0,p_5)
\geq 3$. Then, arguing the same way as in \emph{Case 1.2},
$\delta(p_4,p_{13}) \geq 3.$

\medskip
\emph{Case 2:} $\deg(p_0) =3$.  There are  $3$ non-symmetric sub-cases as follows. 
\begin{figure}[ht]
	\begin{center}
		\includegraphics[scale=0.7]{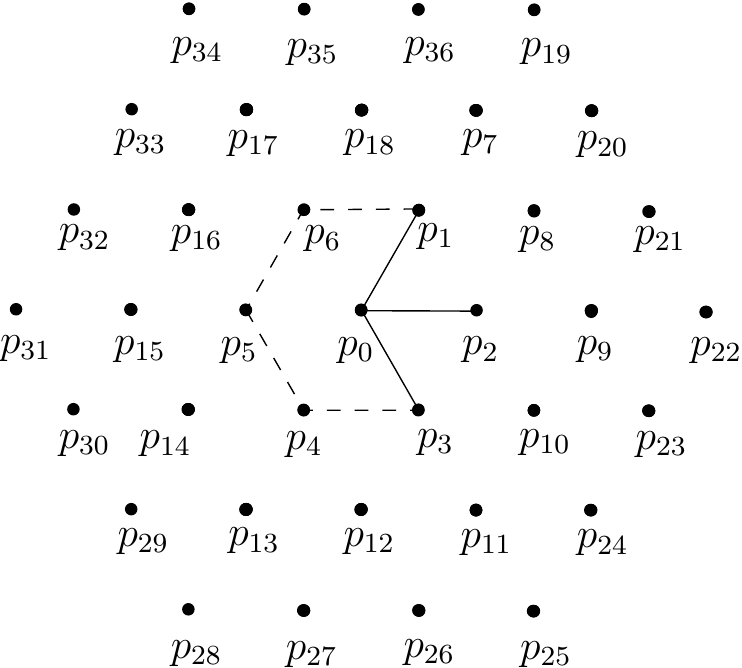}\hspace{2mm}
		\includegraphics[scale=0.7]{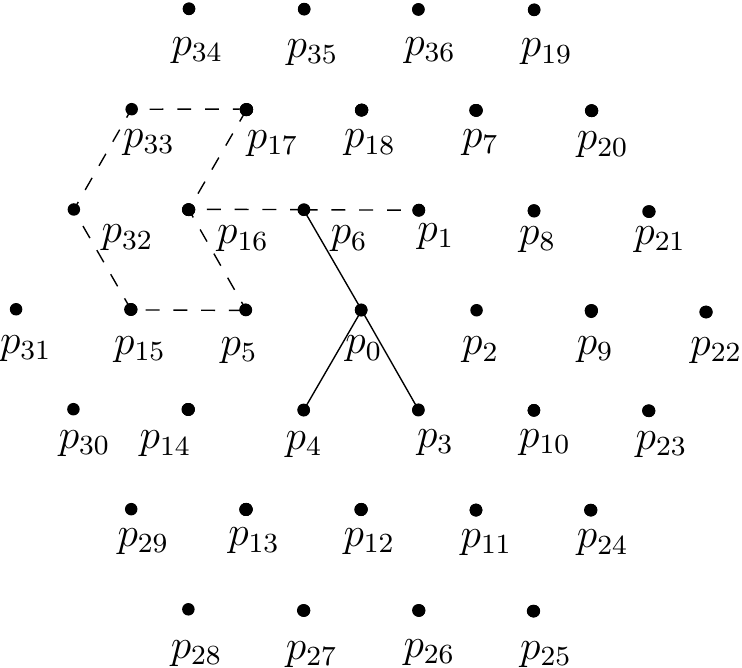}\hspace{2mm}
		\includegraphics[scale=0.7]{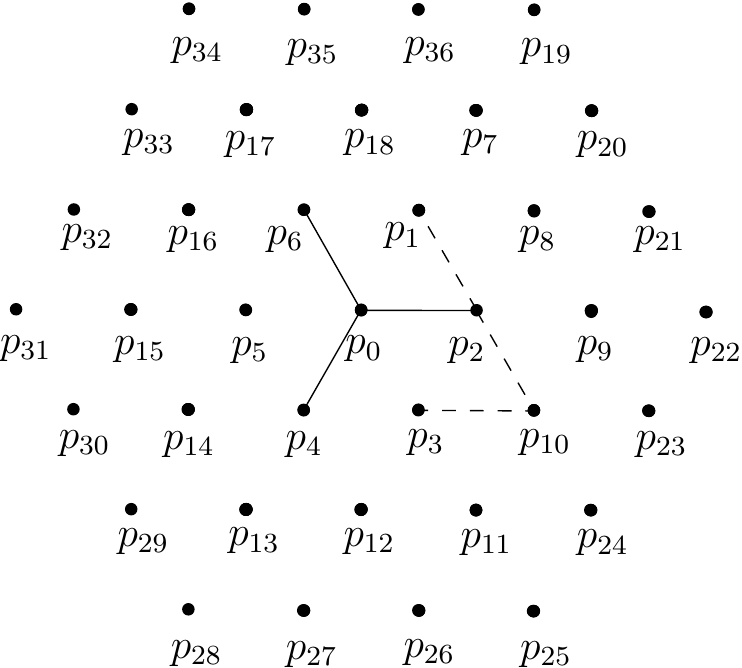}
	\end{center}
	\caption{Illustration of \emph{Case 2} from the proof of lower
          bound in Theorem~\ref{thm:hexagonal:3}. Left: \emph{Case
            2.1}, Middle: \emph{Case 2.2}, Right: \emph{Case 2.3}.} 
	\label{fig:f14} 
\end{figure}

\emph{Case 2.1:} Refer to Fig.~\ref{fig:f14} (left). Let the edges
 $p_0p_1$, $p_0p_2$, $p_0p_3$ be present. Then, by a similar argument
 as in \emph{Case 1.2}, $\delta(p_0,p_5) \geq 3$. 

\smallskip
\emph{Case 2.2:} Refer to Fig.~\ref{fig:f14} (middle). Now, let the
edges $p_0p_3$, $p_0p_4$, $p_0p_6$ be present. Clearly, if  $p_1p_6$
is absent, $\delta(p_0,p_1) \geq 3$. Thus, assume that $p_1p_6$ is
present. Now consider the pair $p_5,p_6$. If $p_5p_6$ is present, then
$\delta(p_6,p_{17}) \geq 3$, arguing in a similar way to \emph{Case
  1.2}. Thus, assume that $p_5p_6$ is absent. The shortest detour
between $p_5,p_6$ is $\rho(p_5,p_{16},p_6)$ which has length $2$. The
next largest detour has length $3$. So, let the edges $p_6p_{16}$ and
$p_5p_{16}$ be present. Now consider the pair $p_{16},p_{17}$. If
$p_{16}p_{17}$ is present, $\delta(p_{16},p_{32}) \geq 3$ (analysis is
similar to \emph{Case 1.2}), otherwise, $\delta(p_6,p_{17}) \geq 3$.

\smallskip
\emph{Case 2.3:} Refer to Fig.~\ref{fig:f14} (right). Let  $p_0p_2$,
$p_0p_4$, $p_0p_6$ be present. To achieve $\delta(p_0,p_1)=2$, at
least one of $p_1p_2$ or $p_1p_6$ needs to be present (the next
largest detour has length $3$). Without loss of any generality, assume
that $p_1p_2$ is present. Now consider the pair $p_2,p_3$. If $p_2p_3$
is present, then by \emph{Case 2.1}, $\delta(p_2,p_9) \geq 3$. So,
assume that $p_2p_3$ is absent. The minimum length detour is
$\rho(p_2,p_{10},p_3)$ (next largest detours have length $3$
each). Thus, let $p_2p_{10}$ and $p_3p_{10}$ be present. Now, observe
that the edges incident to $p_2$ form the same symmetric pattern as
dealt with in \emph{Case 2.2}, where it is shown that the stretch
factor of any resulting degree $3$ plane graph is at least $3$. 
\begin{figure}[ht]
\begin{center}
\includegraphics[scale=0.45]{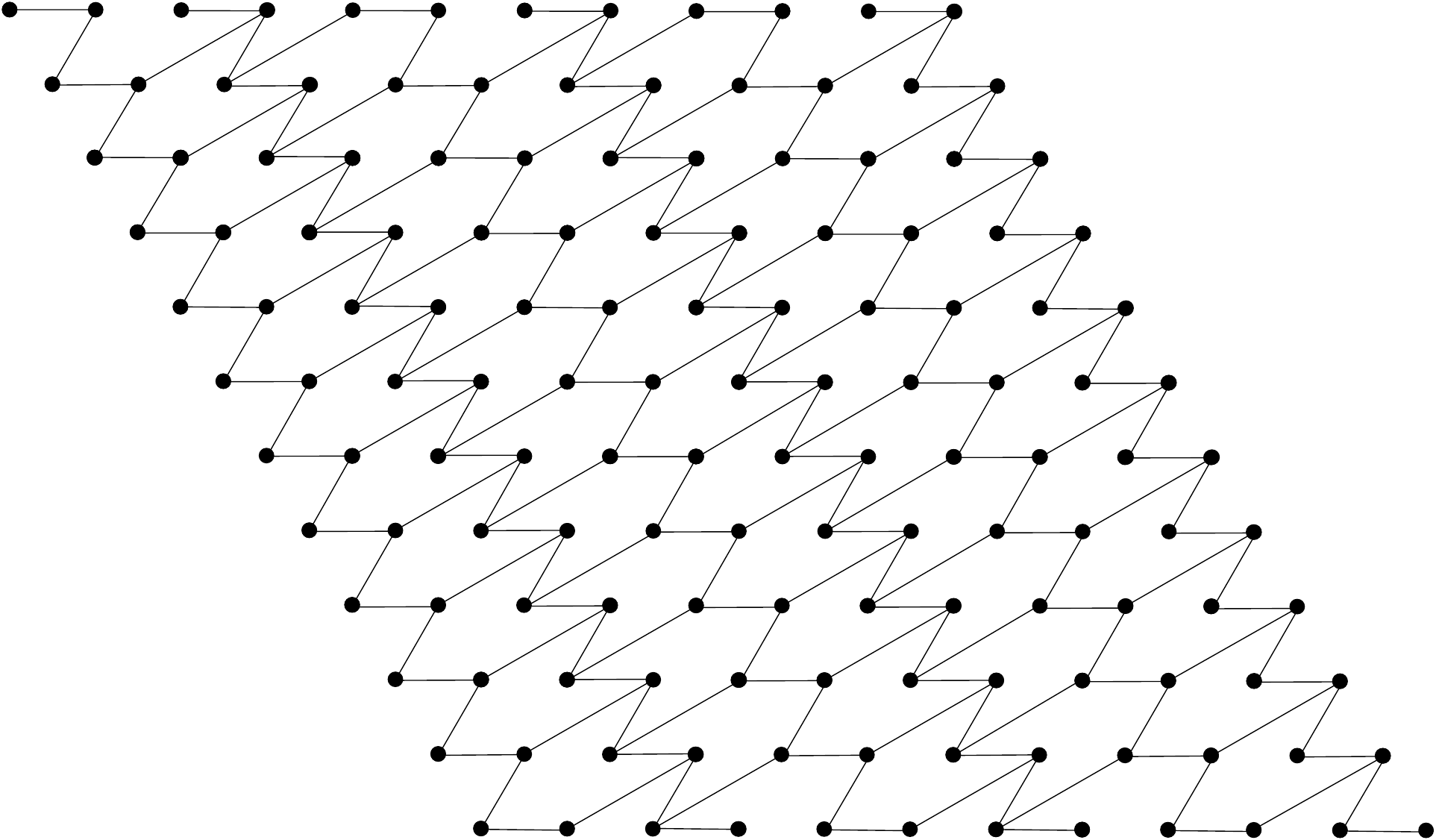}
\includegraphics[scale=0.37]{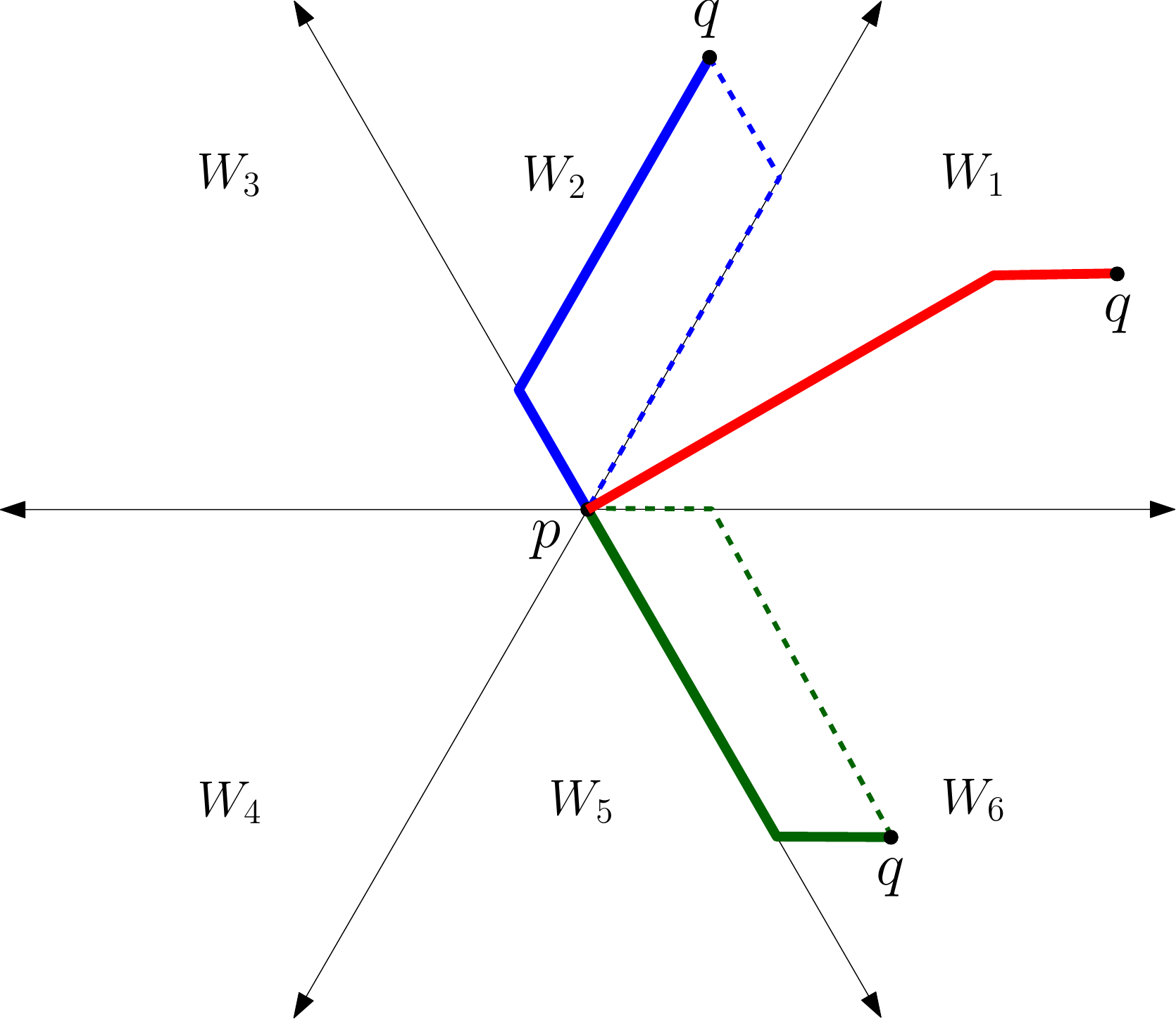}
\end{center}
\caption{Left: a degree $3$ plane graph on $\Lambda$. Right: a
schematic diagram showing the path between $p$ and $q$ when $q$
lies in different wedges determined by $p$. The bold paths
consist of segments of lengths $1$ and $\sqrt{3}$. 
Alternative paths are shown using dotted segments.}	 
\label{fig:f18}
\end{figure}

\smallskip
\textbf{{Upper bound.}} 
We construct a $3$-regular graph $G$ achieving 
$\delta_0(\Lambda,3) \leq 1+\sqrt{3}$, 
as illustrated in Fig.~\ref{fig:f18} (left). 
For any two lattice points $p,q \in \Lambda$, we construct a path in $G$.
Set $p$ as the origin, and 
subdivide the plane into six wedges of $60^\circ$ each, centered at
$p$, and labeled counterclockwise $W_i$, $i=1,\ldots,6$, as in
Fig.~\ref{fig:f18} (right). 
Points on the dividing lines are assigned arbitrarily to any of the two
adjacent wedges. Let $\theta =\pi/3$, and consider the three unit vectors
$\vec{\mu_i}= (\cos {i \theta}, \sin{i \theta})$, for $i=0,1,2$.
We distinguish three cases depending on the location of $q$.
	
\smallskip 
\emph{Case 1}: $q \in W_1$ (the case $q \in W_4$ is symmetric), \ie, 
$\vec{q}= u \vec{\mu_0} + v \vec{\mu_1}$, for some $u,v \in \NN$. 
By the symmetry of $G$, we can assume that $u \geq v \geq 0$ in the analysis. 
Consider the path from $p$ to $q$ via $v \vec{\mu_0} + v \vec{\mu_1}$ 
that visits every lattice point on the diagonal segment,  
as shown in Fig.~\ref{fig:f20} and let $\ell(u,v)$ denote
its length. Observe that connecting 
$a\vec{\mu_0}+a\vec{\mu_1}$ to $(a+2)\vec{\mu_0}+(a+2)\vec{\mu_1}$ 
requires a length of $2+2\sqrt{3}$. Thus, 
$$ \ell(u,v) \leq (2+2\sqrt{3}) \bigg \lfloor \frac{v}{2} \bigg \rfloor + 
(2+\sqrt{3})(v\bmod 1) + (u-v) + \bigg \lceil \frac{u-v}{2} \bigg \rceil \sqrt{3}. $$

\begin{figure}[ht]
\begin{center}		
	\includegraphics[scale=0.39]{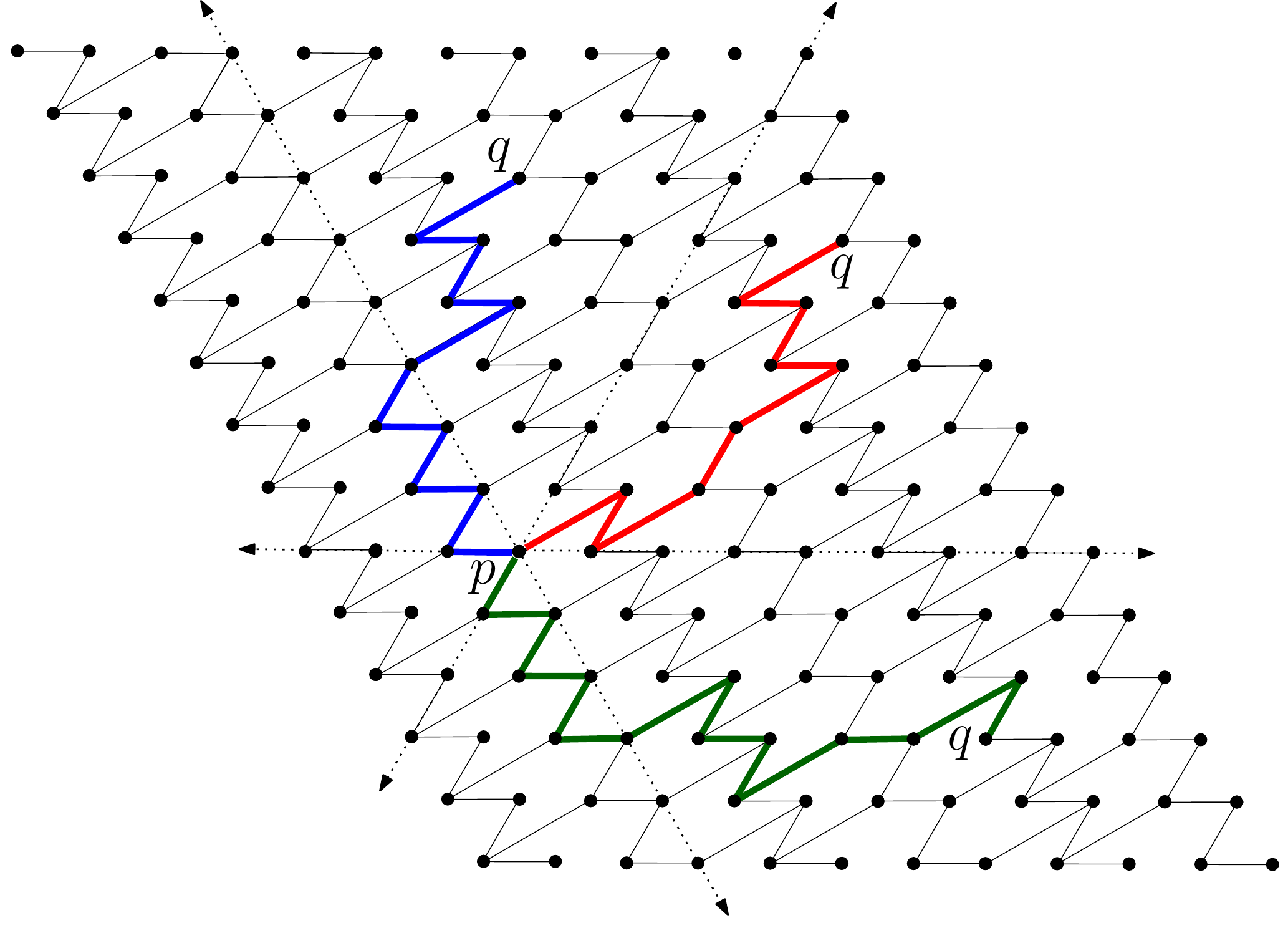}
	\includegraphics[scale=0.39]{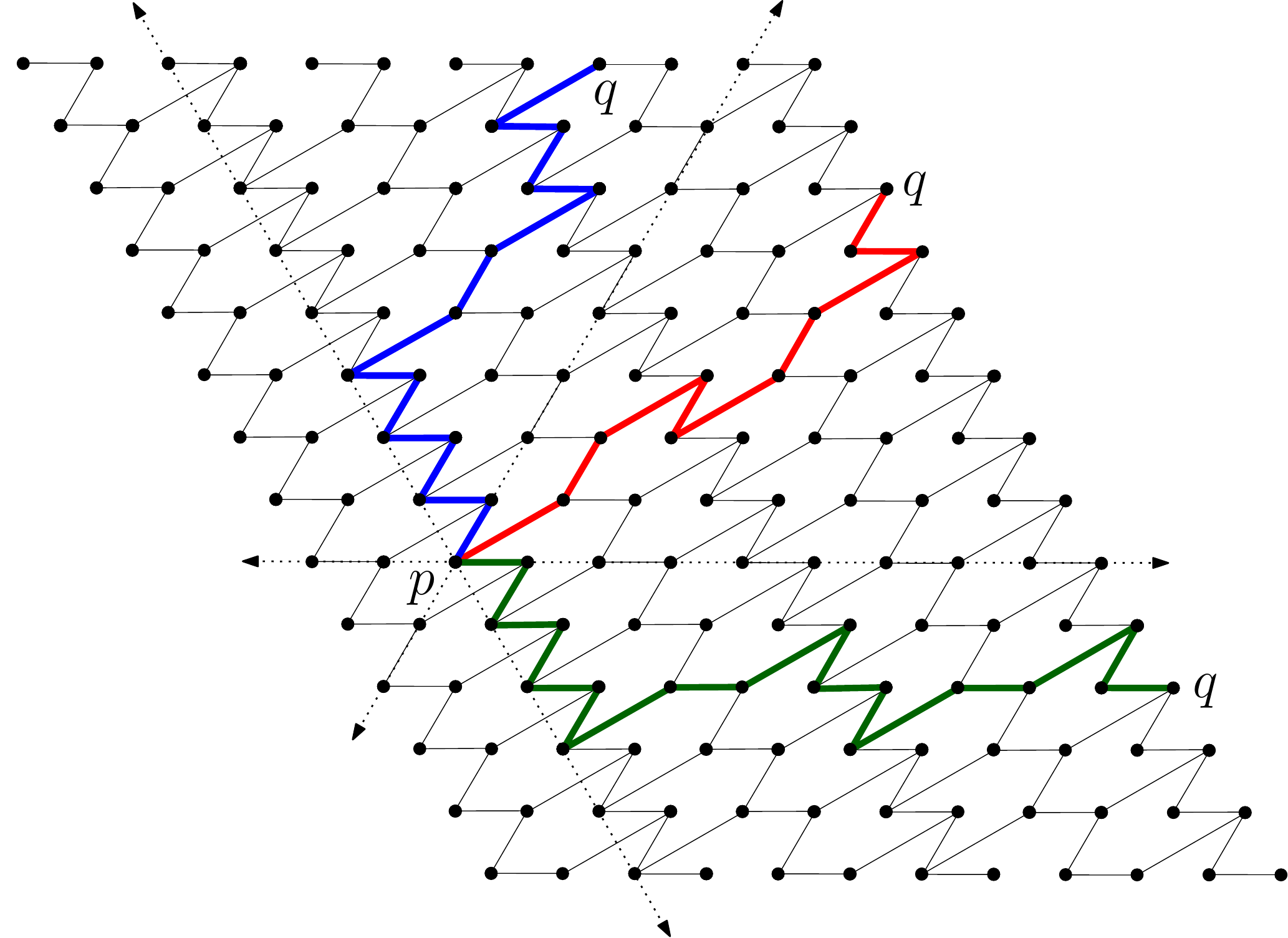}
\end{center}
\caption{Illustration of various paths from $p$ to $q$ depending on
the pattern of edges incident to $p$; for $q \in W_1$ (in red), 
for $q \in W_2$ (in blue), and for $q \in W_6$ (in green).}	
\label{fig:f20}
\end{figure}
	
For even $v$, $\ell(u,v)$ is bounded from above by
\begin{equation*} 
	\ell(u,v) \leq \left(2 + 2 \sqrt{3}\right) \frac{v}{2} 
	+ \left(1 + \frac{\sqrt{3}}{2} \right) (u-v) 
	+ \frac{\sqrt{3}}{2} 
	= \left(1 + \frac{\sqrt{3}}{2} \right) u +
        \frac{\sqrt{3}}{2} v + \frac{\sqrt{3}}{2}. 
\end{equation*}
		
For odd $v$, $\ell(u,v)$ is bounded from above by
\begin{align*} 
\ell(u,v) &\leq \left(2 + 2 \sqrt{3}\right) \frac{v-1}{2} + (2 + \sqrt{3}) +
\left(1 + \frac{\sqrt{3}}{2} \right) (u-v) + \frac{\sqrt{3}}{2} \nonumber \\
&= \left(1 + \frac{\sqrt{3}}{2} \right) u + \frac{\sqrt{3}}{2} v 
+ \left(1 + \frac{\sqrt{3}}{2} \right). 
\end{align*}

The distance $|pq|$ equals
$$ \sqrt{u^2 + v^2 - 2uv \cos \frac{2\pi}{3}} = \sqrt{u^2 + v^2 + uv}, $$
and so the corresponding stretch factor $\delta(p,q)$ is bounded by a
function $\gamma(u,v)$ as follows
\begin{numcases}{\delta(p,q) \leq \gamma(u,v) :=}
\frac{\left(1 + \frac{\sqrt{3}}{2} \right) u + \frac{\sqrt{3}}{2} v + \frac{\sqrt{3}}{2}} 
{\sqrt{u^2 + v^2 + uv}}, & \text{ for even } $v$ \nonumber \\
\frac{\left(1 + \frac{\sqrt{3}}{2} \right) u + \frac{\sqrt{3}}{2} v 
+ \left(1 + \frac{\sqrt{3}}{2} \right)}
{\sqrt{u^2 + v^2 + uv}}, & \text{ for odd } $v$. \nonumber 
\end{numcases}

Consider first the case of even $v$. We have $u \geq v \geq 0$ and
$u \geq 1$ (since $u=v=0$ is not a valid choice). 
We next show that $\gamma(u,v)$ is a decreasing function of $v$ for $v \geq 0$. 
Indeed, 
$$ \frac{\partial \gamma(u,v)}{\partial v} = f(u,v)/[2(u^2 + v^2 + uv)^{3/2}], $$
where
\begin{align*}
f(u,v) &= \sqrt{3} \left(u^2 + v^2 + uv\right) -
\left(2v+u\right) \left[\left(1 + \frac{\sqrt{3}}{2} \right) u +
\frac{\sqrt{3}}{2} v + \frac{\sqrt{3}}{2} \right] \nonumber \\
&= \left(\frac{\sqrt{3}}{2} -1 \right) u^2 
- \left(2 + \frac{\sqrt{3}}{2} \right) uv
- \frac{\sqrt{3}}{2} u -\sqrt{3} v <0.
\label{eq3}
\end{align*}
Consequently,
$$ \delta(p,q) \leq \gamma(u,v) \leq \gamma(u,0) = 
\frac{\left(1 + \frac{\sqrt{3}}{2} \right) u + \frac{\sqrt{3}}{2}}{u}
= 1 + \frac{\sqrt{3}}{2} + \frac{\sqrt{3}}{2u} \leq 1 + \sqrt{3}, $$
as required.
	
Consider now the case of odd $v$. We have $u \geq v \geq 1$. 
Since the expressions of $\gamma(u,v)$ for odd and even $v$ differ by
$(u^2 + v^2 + uv)^{-1/2}$, which is also a decreasing function of $v$, 
it follows that 
$\gamma(u,v)$ for odd $v$ is decreasing on the same interval, in particular on the
interval $v \geq 1$. Consequently,
$$ \delta(p,q) \leq \gamma(u,v) \leq \gamma(u,1)= 
\frac{\left(1 + \frac{\sqrt{3}}{2} \right) u + (1+\sqrt{3})}
{\sqrt{u^2 +u +1}} \leq \frac32 + \frac{2}{\sqrt{3}} < 1 + \sqrt{3}, $$
as required. To check this last inequality, let
$$ h(u)= \frac{\left(1 + \frac{\sqrt{3}}{2} \right) u + (1+\sqrt{3})}
{\sqrt{u^2 +u +1}}, $$ 
and notice that this function is decreasing for $u \geq 1$, thus
$h(u) \leq h(1)= \frac32 + \frac{2}{\sqrt{3}}$.

\smallskip 
\emph{Case 2}: $q \in W_2$ (the case $q \in W_5$ is symmetric), \ie, 
$\vec{q}= u \vec{\mu_2} + v \vec{\mu_1}$, for some $u,v \in \NN$. 
Consider the path from $p$ to $q$ via $u \vec{\mu_2}$
as shown in Fig.~\ref{fig:f20}
and let $\ell(u,v)$ denote its length. (Alternatively, the
path via $v\vec{\mu_1}$ can be used.) 
Then $\ell(u,v)$ is bounded from above as follows
$$ \ell(u,v) \leq 2u + v+ \bigg \lceil \frac{v}{2} \bigg \rceil \sqrt{3} 
\leq 2u + \left( 1 + \frac{\sqrt3}{2} \right) v + \frac{\sqrt3}{2}. $$ 

As in Case 1, the distance $|pq|$ equals
$ \sqrt{u^2 + v^2 - 2uv \cos \frac{2\pi}{3}} = \sqrt{u^2 + v^2 + uv}$,
and so the corresponding stretch factor is
$$ \delta(p,q) \leq \gamma(u,v) := 
\frac{2u + \left(1 + \frac{\sqrt{3}}{2} \right) v + \frac{\sqrt{3}}{2}}
{\sqrt{u^2 + v^2 + uv}}. $$
	
We can assume that $u \geq 1$ and $v \geq 1$ (else the stretch factor
is at most $1 +\sqrt3$). 
Further, since the coefficient of $u$ is larger than that of $v$ in the numerator, 
we can assume that $u \geq v \geq 1$ when maximizing $\gamma(u,v)$.
Set now $\lambda =\frac{u}{v} \geq 1$. 
We have 
$$ \gamma(u,v) = 
\frac{2 \lambda + \left( 1 + \frac{\sqrt3}{2} \right) + \frac{\sqrt3}{2v}}
{\sqrt{\lambda^2 + \lambda +1}}
\leq \frac{2 \lambda + (1 + \sqrt3)} {\sqrt{\lambda^2 + \lambda +1}} := f(\lambda).
$$
It is easy to check that $f(\lambda)$ is decreasing for 
$\lambda \geq 1$, hence for $u,v \geq 1$ we also have 
$$ \delta(p,q) \leq \gamma(u,v) \leq f(1) = 1+\sqrt{3}, $$
as required.

\smallskip 
\emph{Case 3}: $q \in W_6$ (the case $q \in W_3$ is symmetric), \ie, 
$\vec{q}= -u \vec{\mu_2} + v \vec{\mu_0}$, for some $u,v \in \NN$. 
By the symmetry of $G$, this case is symmetric to Case 2.
	
\medskip
This completes the case analysis and thereby the proof of the upper bound.
\end{proof}

\paragraph{Remark.} In~\cite{DG15a} we have shown that a certain $13$-point section
of the hexagonal lattice with six boundary points removed has degree $3$ dilation
at least $1 +\sqrt3$. It is worth noting that
this subset cannot be used however to deduce that the degree $3$ dilation of the
hexagonal lattice is at least $1 +\sqrt3$. Indeed, the reason is that the absence
of the respective boundary points has been explicitly invoked in that argument. 
This is the reason of why in the proof of the lower bound in Theorem~\ref{thm:hexagonal:3}
we have used a different argument. 

\medskip
Next we determine the degree $4$ dilation of the infinite hexagonal lattice.  
\begin{theorem} \label{thm:hexagonal:4}
	Let $\Lambda$ be the infinite hexagonal lattice. 
Then $ \delta_0(\Lambda,4) = 2$.
\end{theorem}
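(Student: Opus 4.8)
The plan is to prove the two bounds separately, with both meeting at the value $2$.

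For the lower bound I would use a short degree-counting argument. In this lattice every vertex has exactly six unit-distance neighbours, in the directions $\pm\vec{\mu_0},\pm\vec{\mu_1},\pm\vec{\mu_2}$. Fix any plane graph $G$ on $\Lambda$ with $\deg(p)\le 4$ for every $p$, and pick an arbitrary vertex $O$. Since $O$ is adjacent to at most four vertices but has six unit neighbours, at least two of those neighbours are \emph{not} adjacent to $O$ in $G$; let $p$ be one of them. Any $O$--$p$ path in $G$ must then use at least two edges, and since the minimum interpoint distance in $\Lambda$ is $1$, every edge has length at least $1$; hence $|\pi_G(O,p)|\ge 2=2\,|Op|$, so $\delta(O,p)\ge 2$ and therefore $\delta(G)\ge 2$. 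As this holds for every admissible $G$, we obtain $\delta_0(\Lambda,4)\ge 2$. (The same argument works verbatim whenever the degree bound is at most $5$.)

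For the upper bound I would exhibit a single degree-$4$ plane $2$-spanner. The natural candidate is the subgraph $G$ obtained from $\Lambda$ by keeping all lattice edges in two of the three edge directions, say those parallel to $\vec{\mu_0}$ and to $\vec{\mu_1}$, and discarding the edges parallel to $\vec{\mu_2}$. Each vertex then retains exactly the four edges $\pm\vec{\mu_0},\pm\vec{\mu_1}$, so $G$ is $4$-regular; being a subgraph of the straight-line lattice drawing it is automatically plane, and all of its edges have unit length. Expressing a point as $q=\alpha\vec{\mu_0}+\beta\vec{\mu_1}$ with $\alpha,\beta\in\ZZ$, the graph $G$ is combinatorially the integer grid, so the shortest $G$-path from $p=(0,0)$ to $q$ is the monotone staircase, of geometric length exactly $|\alpha|+|\beta|$.

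It then remains to bound the stretch factor. Since $|pq|=\sqrt{\alpha^2+\alpha\beta+\beta^2}$ (using $\cos(\pi/3)=1/2$), the stretch of the pair equals exactly $(|\alpha|+|\beta|)/\sqrt{\alpha^2+\alpha\beta+\beta^2}$, and I would split on the sign of $\alpha\beta$. When $\alpha,\beta$ have the same sign the denominator carries $+|\alpha\beta|$; setting $\lambda=|\alpha|/|\beta|$ reduces the expression to $(\lambda+1)/\sqrt{\lambda^2+\lambda+1}$, which is maximised at $\lambda=1$ with value $2/\sqrt{3}<2$ (a routine derivative check, as with $h(u)$ in the proof of Theorem~\ref{thm:hexagonal:3}). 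When $\alpha,\beta$ have opposite signs, say $\alpha=a,\ \beta=-b$ with $a,b\ge 0$, the ratio becomes $(a+b)/\sqrt{a^2-ab+b^2}=(\lambda+1)/\sqrt{\lambda^2-\lambda+1}$ with $\lambda=a/b$, and this single-variable function is maximised at $\lambda=1$ with value $2$. Hence $\delta(G)\le 2$, and equality is attained for $q=\vec{\mu_2}=\vec{\mu_1}-\vec{\mu_0}$ (coordinates $(1,-1)$), so the bound is tight; combined with the lower bound this gives $\delta_0(\Lambda,4)=2$.

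The only genuinely delicate point is the opposite-sign case, precisely because that is where the ratio reaches $2$: one must confirm that the monotone staircase really is a shortest $G$-path (it is, since in the $(\vec{\mu_0},\vec{\mu_1})$ coordinates $G$ is an axis-aligned grid whose edges are all unit segments, so the graph metric coincides with the $\ell_1$ metric) and that $(\lambda+1)/\sqrt{\lambda^2-\lambda+1}$ does not exceed $2$ for $\lambda>0$. Everything else is routine. I would finally remark, exactly as in the square-lattice case, that this construction has degree $4$ while the lower-bound argument applies up to degree $5$, so in fact $\delta_0(\Lambda,k)=2$ for every $4\le k\le 5$.
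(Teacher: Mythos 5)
Your proof is correct and follows essentially the same route as the paper: the same lower bound (a degree-$\le 4$ vertex must miss one of its six unit neighbours, forcing a detour of at least two unit-length edges, hence stretch $\ge 2$), and the same upper bound construction (the $4$-regular plane graph keeping the lattice edges in the $\vec{\mu_0}$ and $\vec{\mu_1}$ directions only), with the identical computation $(u+v)/\sqrt{u^2+v^2\pm uv}\le 2$ reducing to $(u\pm v)^2\ge 0$. Your extra verifications (that the staircase is a shortest path, and tightness at $q=\vec{\mu_1}-\vec{\mu_0}$) are sound but do not change the substance of the argument.
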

\begin{proof}
We first prove the lower bound.
  Let $p_0$ be any point in $\Lambda$ with its six closest neighbors,
  say, $p_1,\ldots,p_6$, where $|p_0 p_i|=1$, for $i=1,\ldots,6$. 
  Since $\deg(p_0) \leq 4$ in any plane degree 4 geometric spanner on $\Lambda$,
  there exists $i \in \{1,\ldots,6\}$ such that the edge $p_0p_i$ is absent;
  we may assume that $i=1$. Then
\begin{equation*}
  \delta(p_0,p_1) \geq \frac{|\rho(p_0,p_i,p_1)|}{|p_0p_1|} \geq 2,
  \text{ where } i \in \{2,6\}.
\end{equation*} 

To prove the upper bound, consider the $4$-regular graph $G$ shown in
Fig.~\ref{fig:hex2}; it remains to show that $ \delta(G) \leq 2$.
For any two lattice points $p,q \in \Lambda$, we construct a path in
$G$. Consider the setup from the proof of Theorem~\ref{thm:hexagonal:3}.
Set the lower point $p$ as the origin $(0,0)$.
\begin{figure}[hbtp]
	\centering
	\includegraphics[scale=0.5]{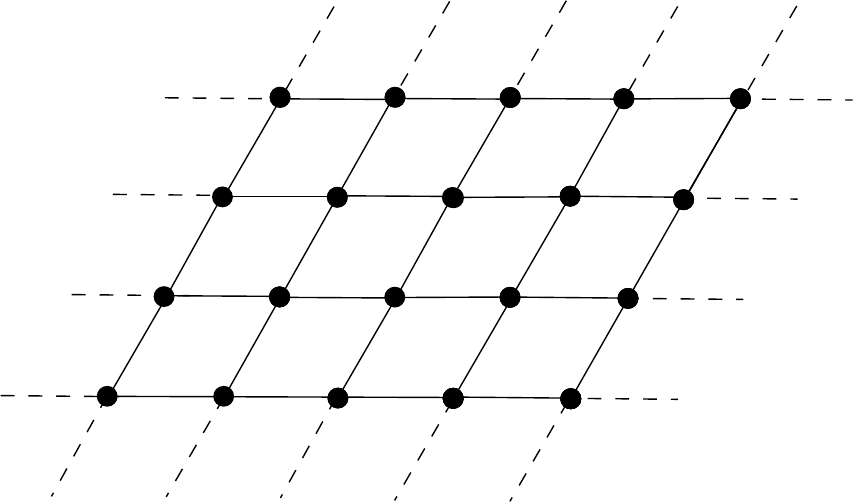}
	\includegraphics[scale=0.5]{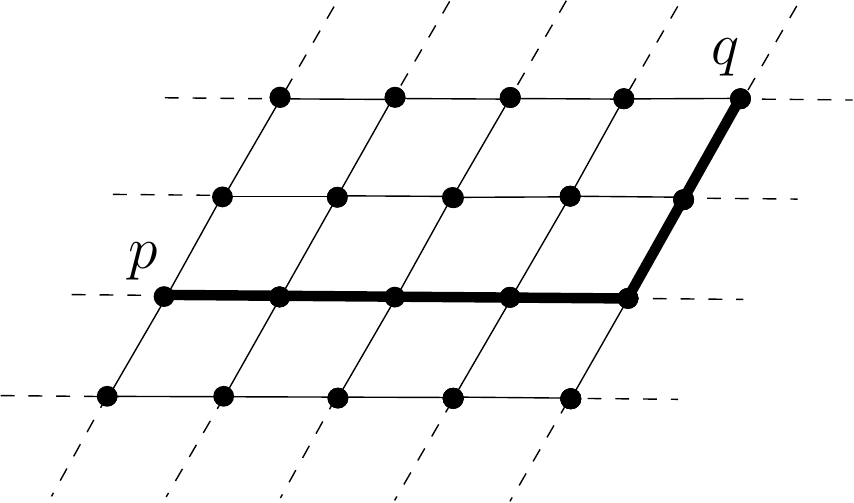}
	\includegraphics[scale=0.5]{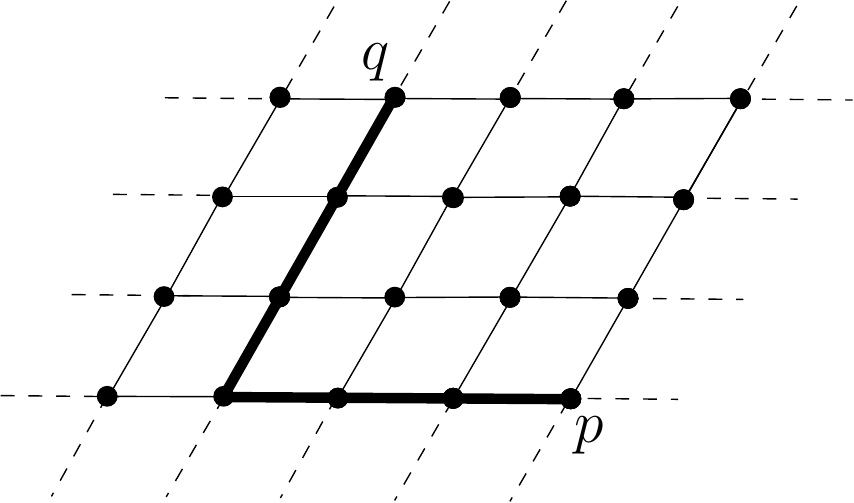}
	\caption{Left: a degree $4$ plane graph $G$ on
        $\Lambda$. Middle, Right: illustration of various paths from $p$ to $q$ 
        depending on their relative position in $\Lambda$.} 
	\label{fig:hex2}
\end{figure}
Let $\theta =\pi/3$, and consider the two unit vectors
$\vec{\mu_i}= (\cos {i \theta}, \sin{i \theta})$, for $i=0,1$.
Then $\vec{q}= \pm u \vec{\mu_0} + v \vec{\mu_1}$, for some $u,v \in \NN$.
Since the two points can be connected by a path in $G$ of length $u+v$,
and the distance between the points is $\sqrt{u^2 + v^2 \pm uv}$
(depending on their relative position in $\Lambda$),
the corresponding stretch factor satisfies
\begin{equation} \label{eq:1}
\delta(p,q) \leq \gamma(u,v) :=\frac{u+v}{\sqrt{u^2 + v^2 \pm uv}} \leq 2,
\end{equation}
as required. Indeed, the above inequalities are
equivalent to $(u \pm v)^2 \geq 0$, which are obvious.
\end{proof}

\paragraph{Remarks.} 

1. Another degree $4$ spanner for the hexagonal lattice
with stretch factor $2$ appears in Fig.~\ref{fig:deg4spanner};
the proof for the stretch factor is left to the reader. 
\begin{figure}[htbp]
\centering
  \includegraphics[scale=0.55]{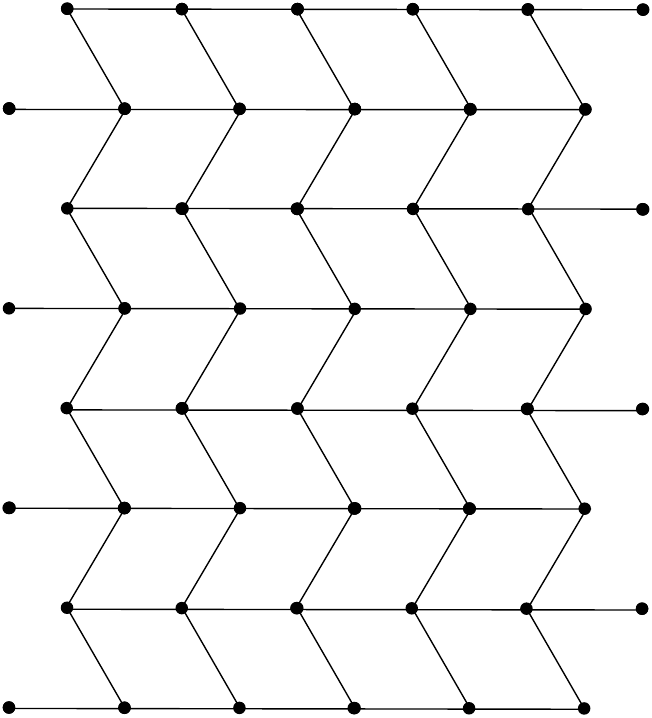}
  \caption{A degree $4$ spanner on $\Lambda$ with stretch factor $2$.}
  \label{fig:deg4spanner}
\end{figure}

\smallskip
2. Note that $\delta_0(\Lambda,5) = 2$ since the bounds in
Theorem~\ref{thm:hexagonal:4} also hold for degree~$5$. 
Let now $k=6$. Connecting each lattice point with all its six closest neighbors
yields a planar graph with stretch factor $2/\sqrt3$. Indeed, as in~\eqref{eq:1},
the stretch factor satisfies
$$ \delta(p,q) \leq \gamma(u,v) :=\frac{u+v}{\sqrt{u^2 + v^2 + uv}} \leq \frac{2}{\sqrt3}, $$
where the above inequality is equivalent to $(u-v)^2 \geq 0$, which is obvious.
Hence $\delta_0(\Lambda,6) \leq 2/\sqrt3$. On the other hand,
an argument similar to that in the proof of the inequality
$\delta_0(\Lambda,3) \geq 1+\sqrt3$ shows that the presence of any edge longer
than $1$ would force the stretch factor to be at least $2$.
We may thus assume that the spanner $G$ contains all unit edges (since no two cross each other);
now the length of a shortest path in $G$ connecting any pair of lattice points at distance
$\sqrt3$ is $2$, thus the stretch factor $2/\sqrt3$ is also needed.
Consequently, $\delta_0(\Lambda,6) = 2/\sqrt3$.
It can be easily checked that
$\delta_0(\Lambda,k)=\delta_0(\Lambda,6)= 2/\sqrt3$ 
for every $k \geq 6$.

\section{Concluding remarks} \label{sec:remarks}

We have given constructive upper bounds and derived close lower bounds 
on the degree 3 dilation of the infinite square lattice 
in the domain of plane geometric spanners. We have also derived exact
values for the degree $4$ dilation  of the square lattice along with
the degree 3 and $4$ dilation of the infinite hexagonal lattice. It is
easy to verify that our bounds also apply for finite sections 
of these lattices; see \cite{DG15b} for some examples.  

 It may be worth pointing out that in addition to the low stretch factors
achieved, the constructed spanners in this paper also have low weight and low
geometric dilation\footnote{When the stretch factor (or dilation)
  is measured over all pairs of points on edges or vertices of a plane graph $G$
  (rather than only over pairs of vertices) one arrives at the concept
  of \emph{geometric dilation} of $G$.}; see for instance~\cite{DEG+06,EGK06}
for basic terms. That is, each of these two parameters is at most a small constant factor
times the optimal one attainable.

As shown in Theorem~\ref{thm:square:3}, the degree $3$ dilation of the infinite
square lattice is at most $(3+2\sqrt2) \, 5^{-1/2}$.
It would be interesting to know whether this upper bound can be improved,
and so we put forward the following. 

\begin{conjecture}
  Let $\Lambda$ be the infinite square lattice. Then
  $\delta_0(\Lambda,3) =(3+2\sqrt2) \, 5^{-1/2} = 2.6065\ldots$
\end{conjecture}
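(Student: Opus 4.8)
The upper bound $\delta_0(\Lambda,3) \leq (3+2\sqrt{2})\,5^{-1/2}$ is already in hand from Theorem~\ref{thm:square:3}, together with two matching degree-3 constructions, so to settle the conjecture it suffices to raise the lower bound from $1+\sqrt{2}$ to $(3+2\sqrt{2})\,5^{-1/2}$. The plan is to show that in \emph{every} plane degree-3 geometric graph $G$ on $\Lambda$ there is a lattice pair at distance $\sqrt{5}$ --- a knight pair, say $p=(0,0)$ and $q=(1,2)$ --- whose shortest connecting path in $G$ has length at least $3+2\sqrt{2}$. This target value is forced by the tight examples: the extremal pairs of both constructions in Theorem~\ref{thm:square:3} are exactly the knight pairs (the case $x=1,\,y=2$, which attains $\delta_0$), so pinning the lower bound on these pairs is the natural line of attack.

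The first step is to classify the cheap routes a knight pair can use. Because no edge may contain a lattice point in its interior, the edges relevant to the region between $(0,0)$ and $(1,2)$ are the unit edges, the diagonal ($\sqrt{2}$) edges, and the direct knight edge of length $\sqrt{5}$; any $G$-path of length strictly below $3+2\sqrt{2}$ can be built only from a short, explicitly enumerable list of such pieces (the direct knight edge, the two length-$(1+\sqrt2)$ L-routes through $(1,1)$ or $(0,1)$, and a handful of three-edge routes). Enumerating this finite list turns the goal into a purely local statement: the knight pair is ``good'' precisely when $G$ contains one of these specified small edge patterns.

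The heart of the argument --- and the step I expect to be the main obstacle --- is to show that the degree-3 budget together with planarity cannot supply a cheap route to \emph{every} knight pair simultaneously. The intended mechanism is a conflict argument: fixing a bounded patch (say a $4\times 4$ block, enlarged if necessary) and arguing by exhaustive case analysis over all plane degree-3 edge sets on the patch --- including edges leaving it --- that some interior knight pair has all of its cheap routes destroyed, hence shortest-path length $\geq 3+2\sqrt{2}$. The homogeneity of $\Lambda$ then promotes this local violation globally: a graph with $\delta(G) < \delta_0$ everywhere would force every translate of the patch to avoid the forbidden configurations at once, which the case analysis rules out. A discharging/averaging variant is worth pursuing in parallel: bound the number of unit and diagonal edges a plane degree-3 graph can pack per unit area, and convert the resulting edge deficit into a guaranteed long detour for some knight pair.

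The difficulty, and the reason the statement remains only a conjecture, is that the target $3+2\sqrt{2}$ is \emph{exactly} tight, leaving the case analysis no slack. In the hexagonal lower bound of Theorem~\ref{thm:hexagonal:3} every bad local configuration yielded a detour of stretch $\geq 3$, comfortably above the threshold $1+\sqrt{3}$; here a single missing unit edge gives only $1+\sqrt{2}$, well below $2.6065\ldots$, so the bound can emerge solely from the compounding of several missing edges whose interactions are dictated by planarity. Tracking how an edge that shortens one knight pair blocks a competing one --- across a patch large enough to force the bound yet small enough to avoid a combinatorial explosion --- is where I expect the genuine work to lie.
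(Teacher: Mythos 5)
You have correctly diagnosed the situation: the statement is stated in the paper as a conjecture, the paper itself supplies only $1+\sqrt2 \leq \delta_0(\Lambda,3) \leq (3+2\sqrt2)\,5^{-1/2}$ via Theorem~\ref{thm:square:3}, and a proof would have to raise the lower bound to match. But your proposal does not do this: the central step --- the exhaustive case analysis showing that planarity plus the degree-3 budget must starve some knight pair of every route shorter than $3+2\sqrt2$ --- is announced rather than executed, and you concede as much in your final paragraph. As it stands, the proposal is a research plan, and it establishes nothing beyond what Theorem~\ref{thm:square:3} already gives; the conjecture remains exactly as open after your argument as before it.

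Beyond the admitted incompleteness, three concrete points would have to be repaired before the plan could even be attempted. First, pinning the lower bound exclusively on knight pairs is a heuristic, not a forced reduction: to prove the conjecture you need \emph{some} pair of stretch at least $(3+2\sqrt2)\,5^{-1/2}$ in every plane degree-3 graph on $\Lambda$, and nothing rules out an adversarial graph that serves every knight pair at stretch strictly below this value while being tight (or worse) on a different pair --- in that scenario your patch analysis can never close, even though the conjecture may still be true. Second, your enumeration of ``cheap pieces'' is too small: a path of length below $3+2\sqrt2 \approx 5.83$ from $(0,0)$ to $(1,2)$ may consist of up to five unit edges, of mixed routes such as the length-$(1+2\sqrt2)$ path through $(1,0)$ and $(2,1)$, and of long edges of length $2$, $\sqrt5$, $2\sqrt2$, and so on; note that the paper's hexagonal lower bound had to eliminate all edges of length at least $\sqrt3$ by a separate preliminary argument before its unit-edge case analysis could begin, and the analogous exclusion step for the square lattice is missing from your plan and is not obviously available, since a $\sqrt5$ edge is itself the direct connection for precisely the pair you examine. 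Third, the paper's remark following Theorem~\ref{thm:hexagonal:3} warns against exactly the finite-patch trap you are walking toward: a $13$-point hexagonal section with degree-3 dilation at least $1+\sqrt3$ could \emph{not} be used to bound the infinite lattice because its argument invoked the absence of boundary points; your $4\times4$ patch ``including edges leaving it'' faces the same unbounded boundary interaction, and since those crossing edges can have distant endpoints, the configuration space is not finite until edge lengths are bounded first --- which circles back to the missing second step. In short, the reduction to a finite case analysis, which is the entire content a proof would need, has not been achieved.
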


\paragraph{A lighter degree $3$ spanner.}
The graph $G$ is illustrated in Fig.~\ref{fig:f22}. It is easy to
check that it is ``shorter'' than each of the two previous spanners of
degree $3$ for the square lattice analyzed in Section~\ref{sec:square}. 
\begin{figure}[ht]
\begin{center}
  \includegraphics[scale=0.43]{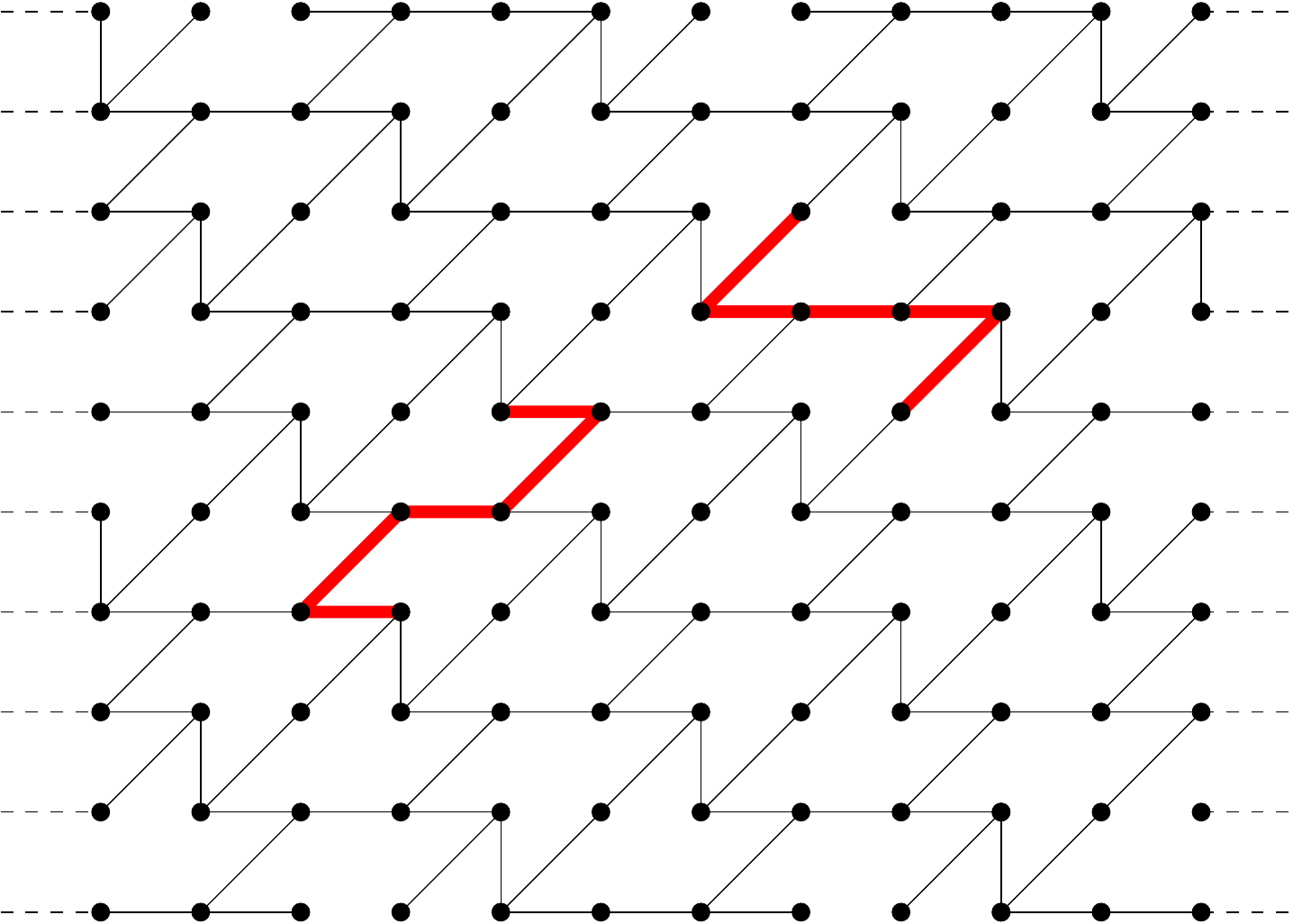}
\end{center}
\caption{A lighter degree $3$ spanner on the infinite square
  lattice. The shortest paths between point pairs with
  pairwise stretch factor $\delta_0$ are shown in red. } 
\label{fig:f22}
\end{figure}

Indeed, the average cost (length) per vertex is in this case smaller
(note that some vertices have degree $2$):
$$ \frac45 \left( \frac12 + \frac12 + \frac{\sqrt2}{2} \right)
+ \frac15 \left( \frac{\sqrt2}{2} + \frac{\sqrt2}{2} \right)
= \frac45 + \frac35 \sqrt2 = 1.6485\ldots, $$
while that for the previous spanners it is 
$\frac12 + \frac12 + \frac{\sqrt2}{2} =1.7071\ldots$ 
In particular, the total length of a square lattice section with $n$ points 
is $1.6485\ldots n + o(n)$ rather than $1.7071\ldots n + o(n)$.
If the stretch factor of $G$ would also be $\delta_0 = (3+2\sqrt2) \, 5^{-1/2}$, 
$G$ would be superior from the length perspective 
to the two spanners described in Section~\ref{sec:square}. 
We conjecture that the stretch factor of this lighter degree $3$ spanner
shown in Fig.~\ref{fig:f22} equals $\delta_0$.

\end{document}